\def\hhmm{\number\hh:\ifnum\mm<10{}0\fi\number\mm}
\DeclareMathOperator{\tame}{tame}
\DeclareMathOperator{\codim}{codim}
\DeclareMathOperator{\Spec}{Spec}
\DeclareMathOperator{\bSpec}{\mathbf{Spec}}
\DeclareMathOperator{\Spf}{Spf}
\DeclareMathOperator{\Char}{char}
\DeclareMathOperator{\Leff}{Leff}
\DeclareMathOperator{\Lef}{Lef}
\DeclareMathOperator{\Div}{div}
\DeclareMathOperator{\Hom}{Hom}
\DeclareMathOperator{\red}{red}
\DeclareMathOperator{\Supp}{Supp}
\newcommand{\ab}{\text{ab}}
\DeclareMathOperator{\et}{\text{\'et}}
\newcommand{\Z}{\mathbb{Z}}
\newcommand{\Q}{\mathbb{Q}}
\renewcommand{\subset}{\subseteq}
\DeclareMathOperator{\Rev}{Rev}
\DeclareMathOperator{\RevEt}{RevEt}
\renewcommand{\phi}{\varphi}
\declaretheoremstyle[spaceabove=15pt,spacebelow=15pt, qed=$\square$]{defstyle}
\declaretheoremstyle[spaceabove=15pt, spacebelow=15pt, bodyfont=\itshape, qed=$\square$]{thmstyle}
\declaretheorem[style=thmstyle,numberwithin=section]{Theorem}
\declaretheorem[sibling=Theorem, style=thmstyle]{Proposition}
\declaretheorem[sibling=Theorem,  style=thmstyle]{Lemma}
\declaretheorem[sibling=Theorem, style=thmstyle]{Corollary}
\declaretheorem[sibling=Theorem, style=defstyle]{Definition}
\declaretheorem[sibling=Theorem, style=defstyle]{Remark}
\title{Lefschetz theorems for tamely ramified coverings}
\author{H\'el\`ene Esnault}
\address[H\'el\`ene Esnault]{Freie Universit\"at Berlin, FB Mathematik und Informatik, Arnimallee 3,
	14195 Berlin, Germany}
\email{esnault@math.fu-berlin.de}
\author{Lars Kindler}
\address[Lars Kindler]{Harvard University,
	Department of Mathematics,
	Science Center,
	One Oxford Street ,
	Cambridge, MA 02138, USA}
\email{kindler@math.harvard.edu}
\thanks{The second author gratefully acknowledges financial support of ``Deutsche Forschungsgemeinschaft'' through a Forschungsstipendium.}
\date{\today{ }\hhmm}
\begin{document}
\begin{abstract}
As is well known, the Lefschetz theorems for the \'etale fundamental group of \cite[Ch.~V]{SGA1} do not hold. We fill a small gap in the literature showing they do for tame coverings. Let $X$ be a regular projective variety over a field $k$, and let $D\hookrightarrow X$ be a strict normal crossings divisor (\cref{defn:ncd}). Then, if $Y$ is an ample regular hyperplane  intersecting $D$ transversally, the restriction functor from tame \'etale coverings (\cref{defn:categories}) of $X\setminus D$ to those of $Y\setminus D\cap Y$ is an equivalence if dimension $X \ge 3$, and fully faithful if  
dimension $X=2$.
The method is dictated by \cite{Grothendieck/Tame}. The authors showed that one can lift tame coverings from $Y\setminus D\cap Y$ to the complement of $D\cap Y$ in the formal completion of $X$ along $Y$. One has then to further lift to $X\setminus D$.
\end{abstract}

\maketitle
\section{Introduction}
Let $X$ be a locally noetherian scheme, let $Y$ be a closed subscheme, and let  $X_Y$ be the formal completion of $X$ along $Y$.  Recall (see  \cite[X.2, p.~89]{SGA2}) that the condition $\Lef(X,Y)$ holds if for every open neighborhood $U$  of $Y$ and every coherent locally free sheaf $E$ on $U$, the canonical map $H^0(U,E)\rightarrow H^0(X_Y,E_{X_Y})$ is an isomorphism.  For the condition $\Leff(X,Y)$, one  requires in addition that every coherent locally free sheaf on $X_Y$ is the restriction of a coherent locally free sheaf on some open neighborhood $U$ of $Y$.

Assume $X$ is defined over a field $k$ and is proper.   Let $D$ be another divisor, which has no common component with $Y$, such that $D\hookrightarrow X$ and $D\cap Y\hookrightarrow Y$ are strict   normal crossings divisors (\cref{defn:ncd}).  Let   $\bar{y} \to Y\setminus D\cap Y$ be a geometric point.  We define the functoriality morphism 
\begin{equation}\label{eq:main-morphism}
		\pi_1^{\tame}(Y\setminus D,\bar{y})\rightarrow \pi_1^{\tame}(X\setminus D,\bar{y}).
\end{equation}
between the tame fundamental groups \cite[\S.7]{Kerz/Tameness}. If $\Char(k)=0$, this is the usual functoriality morphism between the \'etale fundamental groups of $Y\setminus D$ and $X\setminus D$.
The aim of this note is to prove:

\begin{Theorem}\label{thm:main}
Assume   $X$ and $Y$ are  projective, regular, connected over $k$.
\begin{enumerate}[label=\emph{(\alph*)}, ref=(\alph*)]
	\item\label{main:LEF} If   $\Lef(X,Y)$ holds, then \eqref{eq:main-morphism} is surjective.
	\item\label{main:LEFF} If $\Leff(X,Y)$ holds, and if $Y$ intersects all effective divisors on $X$, then \eqref{eq:main-morphism} is an isomorphism.
\end{enumerate}
\end{Theorem}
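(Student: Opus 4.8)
The plan is to translate the statement about tame fundamental groups into one about the restriction functor on categories of tame coverings, and to factor that functor through the formal completion $X_Y$, where the cited results of \cite{Grothendieck/Tame} already control the passage to $Y\setminus D\cap Y$. Write $\Cov^{\tame}(-)$ for the relevant Galois category of tame \'etale coverings. By the standard dictionary for Galois categories, \eqref{eq:main-morphism} is surjective precisely when the restriction functor $R\colon \Cov^{\tame}(X\setminus D)\to \Cov^{\tame}(Y\setminus D\cap Y)$ is fully faithful, and it is an isomorphism precisely when $R$ is an equivalence. I would factor
\[
R\colon \Cov^{\tame}(X\setminus D)\xrightarrow{\ \alpha\ }\Cov^{\tame}(X_Y\setminus D)\xrightarrow{\ \beta\ }\Cov^{\tame}(Y\setminus D\cap Y),
\]
where $\alpha$ is restriction to the formal completion and $\beta$ is restriction to $Y$. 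The key external input is that $\beta$ is an equivalence by \cite{Grothendieck/Tame}, this being exactly their lifting of tame coverings from $Y\setminus D\cap Y$ to the formal neighborhood. Hence everything reduces to analysing $\alpha$: it suffices to prove that $\Lef(X,Y)$ makes $\alpha$ fully faithful, and that $\Leff(X,Y)$ together with the intersection hypothesis makes $\alpha$ an equivalence.

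For part \ref{main:LEF} I would prove $\alpha$ fully faithful. Given tame coverings $Z_1,Z_2$ of $X\setminus D$, pass to their normalizations $\bar Z_i\to X$; since $X$ is regular, $D$ is strict normal crossings and the $Z_i$ are tame, Abhyankar's lemma gives that $\bar Z_i\to X$ is locally Kummer and that the sheaves $\pi_{i*}\O_{\bar Z_i}$ are coherent and locally free. Because $X\setminus D$ is normal and connected and morphisms of finite \'etale covers are determined on dense opens, $\Hom_{X\setminus D}(Z_1,Z_2)=\Hom_X(\bar Z_1,\bar Z_2)$ is identified with the set of those global sections of the coherent locally free sheaf $\HHom_{\O_X}(\pi_{2*}\O_{\bar Z_2},\pi_{1*}\O_{\bar Z_1})$ that are algebra homomorphisms. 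Taking $U=X$ in $\Lef(X,Y)$ identifies these global sections with their counterparts over $X_Y$, and the algebra-homomorphism condition is preserved under the comparison; thus $\alpha$ is fully faithful, $R=\beta\circ\alpha$ is fully faithful, and \eqref{eq:main-morphism} is surjective.

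For part \ref{main:LEFF} I would add essential surjectivity of $\alpha$, which I expect to be the main obstacle. Starting from a tame covering $\hat W$ of $X_Y\setminus D$, form the coherent locally free algebra $\hat{\mathcal A}=\pi_*\O_{\overline{\hat W}}$ on $X_Y$; by $\Leff(X,Y)$ it is the completion of a coherent locally free sheaf $\mathcal B$ on some open neighborhood $U$ of $Y$, and the full faithfulness of completion furnished by $\Lef(X,Y)$ transports the multiplication, so that after shrinking $U$ the sheaf $\mathcal B$ becomes a sheaf of algebras and $W_U:=\bSpec_U\mathcal B\to U$ a finite morphism completing to $\overline{\hat W}$. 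The delicate points are to descend the \'etale-off-$D$ and tame-along-$D$ structure: vanishing of the discriminant and tameness of the inertia are conditions at the codimension-one points of $D$, they hold after completion along $Y$, hence on $U$ after a further shrinking, making $W_U\to U$ tame along $D\cap U$. It remains to extend $W_U|_{U\setminus D}$ to a tame covering of all of $X\setminus D$, and here the hypothesis that $Y$ meets every effective divisor is decisive: it forces $X\setminus U$, and in particular $D\setminus U$, to have codimension $\ge 2$, so that no component of $D$ is lost and every generic point of $D$ lies in $U$. Zariski--Nagata purity for the regular scheme $X\setminus D$ then extends the finite \'etale covering across the codimension-$\ge 2$ locus $(X\setminus U)\setminus D$, tameness being read off at the generic points of $D$, which lie in $U$. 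Finally, the full faithfulness established in part \ref{main:LEF} guarantees that this extension restricts to $\hat W$, so $\alpha$ is essentially surjective, $R$ is an equivalence, and \eqref{eq:main-morphism} is an isomorphism.
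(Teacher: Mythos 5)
Your overall strategy is exactly the paper's: translate surjectivity/isomorphy of \eqref{eq:main-morphism} into full faithfulness/equivalence of the restriction functor on tame coverings (\cref{thm:main-reformulation}), factor it through the formal completion $X_Y$ (your $\alpha,\beta$ are the paper's $F_1,F_2$), quote Grothendieck--Murre for the equivalence $\beta$, use $\Lef(X,Y)$ on the locally free algebras $\pi_*\O$ for full faithfulness of $\alpha$, and use $\Leff(X,Y)$ to algebraize over an open $U\supseteq Y$ followed by the codimension-$\ge 2$/purity extension to all of $X$.

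There is, however, one genuine gap in your essential surjectivity step. You claim that after shrinking $U$ the algebraized covering $W_U\to U$ becomes tame along $D\cap U$ because ``vanishing of the discriminant and tameness of the inertia are conditions at the codimension-one points of $D$.'' But tameness in the sense of \cref{defn:categories} also requires the total space $W_U$ to be \emph{normal} in a neighborhood of $Y$, and normality is neither a condition supported at codimension-one points of $D$ nor automatic for a finite flat algebra lifted from the formal completion. The paper needs all of \Cref{sec:completion} to supply this: excellence of finite type $k$-algebras and \cref{lemma:excellent} give that the formal scheme $X_Y$ is normal (without which $\Rev^{D|_{X_Y}}(X_Y)$ is not even defined and Grothendieck--Murre does not apply) and that the formal covering $\mathfrak Z$ is normal (\cref{cor:formal-lifting-concrete}); \cref{lemma:completion} then descends normality to $Z=\bSpec\mathcal A$ on a Zariski neighborhood of $g^{-1}(Y)$. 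Only with that input does \cite[Cor.~4.1.5]{Grothendieck/Tame} yield an open $V\supseteq Y$ on which the covering is tame --- and note that this corollary is also what justifies your phrase ``they hold after completion along $Y$, hence on $U$ after a further shrinking'': the underlying space of $X_Y$ is $Y$, which does not contain the generic points of the components of $D$, so passing from the formal statement to a Zariski-open one is a real spreading-out step, not a tautology. With the normality and spreading-out points supplied, your argument coincides with the paper's.
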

This generalizes Grothendieck's Lefschetz Theorem  \cite[X, Cor.~2.6, Thm.~3.10, p.~97]{SGA2} for $D=0$, which is then true under less restrictive assumptions.

As is well understood, the hypothesis of \cref{thm:main}, \ref{main:LEF} is satisfied if $\dim X \geq 2$ while those of \cref{thm:main}, \ref{main:LEFF} are satisfied if $\dim X>2$ and if $Y$ is an ample regular hyperplane in $X$ (see {\cite[X, Ex.~2.2, p.~92]{SGA2}}). 

We finally remark that if  $k$ is algebraically closed, an alternative approach to prove \cref{thm:main}, \ref{main:LEFF} would be through the theory of regular singular stratified bundles by combining \cite[Thm.~5.2]{Gieseker/FlatBundles} with \cite[Thm.~1.1]{Kindler/FiniteBundles}. 

\medskip

If $X$ is no longer regular, but has singularities in codimension $\ge 2$, if $D\hookrightarrow X$ is  a divisor such that $X\setminus D$ is smooth, and if $k$ is finite, Drinfeld \cite[Cor.~C.2, Lemma C.3]{Drinfeld} proved that if $Y\hookrightarrow X$ is a regular projective curve,  intersecting the smooth locus of $D$ transversally, then the restriction functor from \'etale covers of $X\setminus D$, tamely ramified along the smooth part of the components of $D$, to the one on $Y\setminus D\cap Y$, is fully faithful.  By standard arguments we show in \cref{prop:drinfeld} that one may assume $k$ to be any field.
\medskip

As one does not have  at disposal resolution of singularities in characteristic $p>0$, it would be nice to generalize Drinfeld's theorem from  dimension $Y$ equal to $ 1$ to higher dimension, even if over a imperfect field one has to assume $X\setminus D$ to be smooth. However it is not even clear what would then be the correct formulation. In another direction, in light of Deligne's finiteness theorem \cite{Esnault/Kerz}, one would like to define a good notion of fundamental group with bounded ramification and show Lefschetz' theorems  for it. The abelian quotient of this yet non-existing theory is the content of 
\cite{Kerz/Lefschetz}. 

\medskip

In \Cref{sec:tameness} we make precise the notions of tame coverings and normal crossings divisor that we use.  In \Cref{sec:tameness-formal} we recall Grothendieck-Murre's   notion of tameness for finite maps between formal schemes and prove the first important lemma (\Cref{lemma:excellent}), before we carry out the proof of  \Cref{thm:main} in \Cref{sec:proof}.  In \Cref{sec:drinfeld} we extend Drinfeld's theorem over any field. We comment in \Cref{sec:generalization} on the relation between the Lefschetz' theorems discussed in this note and Deligne's finiteness theorem. 

\medskip

\emph{Acknowledgements:}  It is a pleasure to thank Moritz Kerz for discussions on the topic of this note at the time \cite{Esnault/Kerz} was written.
We then posed the problem solved in this note  to Sina Rezazadeh, who unfortunately left mathematics.

\section{Tamely ramified coverings}\label{sec:tameness}
We recall the definition of a (strict) normal crossings divisor.
\begin{Definition}[{\cite[1.8, p.~26]{Grothendieck/Tame}}]\label{defn:ncd}
Let $X$ be a locally noetherian scheme, and let $\{D_i\}_{i\in I}$ be  a finite set of effective Cartier divisors on $X$. For every $x\in X$ define $I_x:=\{i\in I| x\in \Supp D_i\}\subset I$.
%
%
\begin{enumerate}[label={(\alph*)}]
\item  The family of divisors $\{D_i\}_{i\in I}$ is said to have \emph{strict normal crossings} if for every $x\in \bigcup_{i\in I}\Supp(D_i)$ 
\begin{enumerate}[label={(\roman*)}]
	\item the local ring $\mathcal{O}_{X,x}$ is regular;
	\item for every $i\in I_x$, locally in $x$ we have $D_i=\sum_{j=1}^{n_i} \Div(t_{i,j})$ with $t_{i,j}\in \mathcal{O}_{X,x}$, such that the set $\{t_{i,j}|i\in I_x, 1\leq j\leq n_i\}$ is part of a regular system of parameters of $\mathcal{O}_{X,x}$.
\end{enumerate}
\item The family of divisors $\{D_i\}_{i\in I}$ is said to have \emph{normal crossings} if every $x\in \bigcup_{i\in I}\Supp(D_i)$ has an \'etale neighborhood $\gamma:V\rightarrow X$, such that the family $\{\gamma^*D_i\}_{i\in I}$ has strict normal crossings.
\item An effective Cartier divisor $D$ hast (strict) normal crossings if the underlying family of its reduced irreducible components has (strict) normal crossings.  
\end{enumerate}
\end{Definition}
\begin{Remark}\label{rem:ncd}
The divisor $D$ has strict normal crossings if and only if it has normal crossings and if its irreducible components are regular. One direction is \cite[Lemma 1.8.4, p.~27]{Grothendieck/Tame}, while the other direction comes from  (a) (ii), as the $t_{i,x}\in \mathcal{O}_{X,x}$ are local parameters. 
\end{Remark}

\begin{Definition}\label{defn:categories}Let $X$ be a locally noetherian, normal scheme and let $D$ be a divisor on $X$ with normal crossings.  We write $\Rev(X)$ for  the category of all finite $X$-schemes and  $\RevEt(X)$ for  the category of finite \'etale $X$-schemes.  Following \cite[2.4.1, p.~40]{Grothendieck/Tame}, we define $\Rev^D(X)$ to be the full subcategory of $\Rev(X)$ with objects the finite $X$-schemes  tamely ramified along $D$. Recall that a finite morphism $f:Z\rightarrow X$ is called \emph{tamely ramified} along $D$, if
\begin{enumerate}[label={(\roman*)}]
\item $Z$ is normal,
\item $f$ is \'etale over $X\setminus \Supp(D)$,
\item every irreducible component of $Z$ dominates an irreducible component of $X$,
\item for $x\in D$ of codimension $1$ in $X$, and any $z\in Z$  mapping to $x$, the extension of discrete valuation rings $\mathcal{O}_{X,x}\rightarrow \mathcal{O}_{Z,z}$ is tamely ramified (\cite[Def.~2.1.2, p.~30]{Grothendieck/Tame}).
\end{enumerate}
The natural functors  $\RevEt(X) \to \Rev^D(X)\to \Rev(X)$ are fully faithful. 
\end{Definition}
\begin{Remark}\label{rem:X-minus-D-vs-X}
The restriction functor $\Rev^{D}(X)\rightarrow \RevEt(X\setminus D)$ is fully faithful when $X$ is proper. Its essential image is the full subcategory of \'etale coverings of $X\setminus D$ which are tamely ramified along $D$, which, by the fundamental theorem \cite[Prop.~4.2]{Kerz/Tameness}, does not depend on the choice of $X$ and is even definable on a normal compactification of $X\setminus D$. 
A quasi-inverse functor  assigns to $Z\rightarrow X\setminus D$, \'etale, tame, with $Z$ connected,  the  normalization of $X$ in the function field of $Z$.	
\end{Remark}
\Cref{rem:X-minus-D-vs-X} shows that \cref{thm:main} is equivalent to the following.
\begin{Theorem}\label{thm:main-reformulation}
Let $k$ be a field, let $X$ be a projective, regular, connected $k$-scheme, let $D$ be a strict normal crossings divisor on $X$ and let $Y\subset X$ be a regular, closed subscheme, such that the inverse image $D|_Y$ of $D$ on $Y$ exists and is a strict normal crossings divisor.
\begin{enumerate}[label=\emph{(\alph*)}, ref=(\alph*)]
\item  If $\Lef(X,Y)$ holds then restriction induces a fully faithful functor
	\begin{equation}\label{eq:res-functor}\Rev^{D}(X)\rightarrow
		\Rev^{D|_Y}(Y).\end{equation}
\item If $\Leff(X,Y)$ holds and if $Y$ intersects every effective divisor on $X$, then
	\eqref{eq:res-functor} is an equivalence.
\end{enumerate}
\end{Theorem}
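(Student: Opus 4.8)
The plan is to follow the Grothendieck--Lefschetz method of \cite[X]{SGA2}, factoring the restriction functor \eqref{eq:res-functor} through the tame coverings of the formal completion $X_Y$ of $X$ along $Y$. Writing $D_Y$ for the pullback of $D$ to $X_Y$, I factor
\begin{equation*}
\Rev^{D}(X)\xrightarrow{\ \alpha\ }\Rev^{D_Y}(X_Y)\xrightarrow{\ \beta\ }\Rev^{D|_Y}(Y),
\end{equation*}
where $\alpha$ is completion along $Y$ and $\beta$ is restriction to the reduction $Y$. By the Grothendieck--Murre theory of tameness for formal schemes recalled in \Cref{sec:tameness-formal}, the functor $\beta$ is an equivalence; this is exactly the statement that tame coverings lift from $Y$ to $X_Y$. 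Everything thus reduces to $\alpha$: I would show that $\Lef(X,Y)$ makes $\alpha$ fully faithful, giving (a), and that---since $\Leff(X,Y)$ implies $\Lef(X,Y)$---the stronger hypothesis of (b) makes $\alpha$ in addition essentially surjective, hence an equivalence.

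Faithfulness of $\alpha$ is formal: the algebras $f_{i*}\O_{Z_i}$ are torsion free on the integral scheme $X$, so by Krull's intersection theorem completion along $Y$ is injective on global sections of $\HHom_{\O_X}(f_{2*}\O_{Z_2},f_{1*}\O_{Z_1})$, whence two morphisms with equal completion coincide. For fullness, a morphism $\widehat{Z_1}\to\widehat{Z_2}$ over $X_Y$ is a global section of the completion of this $\HHom$ compatible with the algebra structures. Since a tame covering is finite flat in codimension one, each $f_{i*}\O_{Z_i}$ is locally free away from a closed set of codimension $\ge 2$; the sheaf $\HHom_{\O_X}(f_{2*}\O_{Z_2},f_{1*}\O_{Z_1})$ is therefore reflexive and locally free in codimension one. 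In its depth reformulation \cite[X]{SGA2}, the global-section comparison of $\Lef(X,Y)$ extends from locally free to such reflexive sheaves, and so identifies the formal morphism with a morphism over some open neighborhood $U$ of $Y$; by normality of the $Z_i$ and density of $U$ this extends uniquely to the desired $Z_1\to Z_2$ over $X$.

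For essential surjectivity, let $\widehat{Z}\in\Rev^{D_Y}(X_Y)$ with structure algebra $\widehat{\mathcal A}$. As above $\widehat{\mathcal A}$ is reflexive and locally free in codimension one, so $\Leff(X,Y)$, extended to such sheaves via the same depth reformulation, algebraizes it to a reflexive coherent $\O_U$-algebra on some neighborhood $U$ of $Y$---its multiplication being algebraized by the $\HHom$-comparison of the previous step---whose restriction to $U\setminus D$ is a finite étale covering with completion $\widehat{Z}|_{X_Y\setminus D_Y}$. The hypothesis that $Y$ meets every effective divisor forces $\codim_X(X\setminus U)\ge 2$, since an effective divisor inside $X\setminus U\subset X\setminus Y$ would be disjoint from $Y$. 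As $X\setminus D$ is regular, Zariski--Nagata purity extends the étale covering of $U\setminus D$ uniquely across the codimension $\ge 2$ locus to a covering of $X\setminus D$; normalizing $X$ in it yields $Z\in\Rev(X)$. Tameness of $Z$ along $D$ is tested at the generic points of the components of $D$, and these lie in $U$: each is a generization of a point of $Y\cap D\ne\varnothing$, and open sets are stable under generization. Thus tameness is inherited from $\widehat Z$ over $U$, so $Z\in\Rev^{D}(X)$ with $\alpha(Z)\cong\widehat Z$.

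The step I expect to be the main obstacle is the algebraization in (b). While $\Leff(X,Y)$ algebraizes locally free formal sheaves, the structure sheaf of a tame covering is only locally free over the étale locus $X_Y\setminus D_Y$, and one must algebraize there and re-extend across $D$ by normalization while ensuring that the algebra structure, normality, and---above all---the formal tameness of Grothendieck--Murre are matched by genuine tameness of the algebraized covering along $D\cap U$. Controlling normality under the interplay of completion and algebraization is the technical heart of the argument, and is precisely where the excellence input of \Cref{lemma:excellent} is required.
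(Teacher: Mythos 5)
Your overall skeleton matches the paper's: factor \eqref{eq:res-functor} through $\Rev^{D|_{X_Y}}(X_Y)$, get the second functor as an equivalence from Grothendieck--Murre (\cref{formal-lifting} together with \cref{cor:formal-lifting-concrete}), and use $\Lef$/$\Leff$ on the first. But there is a genuine gap in how you feed the covering algebras into $\Lef(X,Y)$ and $\Leff(X,Y)$. You treat $f_*\O_Z$ as only ``finite flat in codimension one'', hence merely reflexive, and then assert that the $\Lef$/$\Leff$ comparisons ``extend from locally free to such reflexive sheaves'' via a depth reformulation. That extension is not part of the definition of $\Lef$/$\Leff$ and is not justified; it is exactly the kind of statement that can fail without additional depth hypotheses, and your essential-surjectivity step leans on it even harder (algebraizing a reflexive formal algebra is not what $\Leff$ provides). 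The point you are missing is that this detour is unnecessary: since $X$ is regular and $D$ has normal crossings, a tamely ramified covering is \emph{flat} (\cite[Cor.~2.3.5, p.~39]{Grothendieck/Tame}, via Abhyankar's lemma), and likewise for formal tame coverings of $X_Y$. So $f_*\O_Z$ and $f_*\O_{\mathfrak{Z}}$ are genuinely locally free, $\Hom$'s and algebra structures are global sections of locally free sheaves, and $\Lef$/$\Leff$ apply exactly as stated. This single fact is the hinge of the paper's proof of both full faithfulness and algebraization.

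Second, you explicitly flag as ``the main obstacle'' the step where the algebraized $g:Z\to U$ must be shown normal near $g^{-1}(Y)$ and tamely ramified along $D\cap U$, and you do not resolve it. The paper does: \cref{cor:formal-lifting-concrete}~\ref{formal-lifting-concrete-2} gives normality of $\mathfrak{Z}$, \cref{lemma:completion} (excellence) transfers this to normality of $Z$ in a neighborhood of $g^{-1}(Y)$, and then \cite[Cor.~4.1.5, p.~54]{Grothendieck/Tame} converts formal tameness into tameness of $g$ over some open $V\supset Y$. Your final codimension-$\ge 2$ extension via purity and normalization, and the observation that generic points of components of $D$ lie in $U$ because $Y$ meets every effective divisor, do agree with \cref{lemma:codim2}. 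So the proposal is a correct outline of the right strategy, but as written it substitutes an unproved reflexive-sheaf variant of $\Lef$/$\Leff$ for the flatness of tame covers, and leaves the normality/tameness transfer---which is where \cref{lemma:completion} and the Grothendieck--Murre comparison actually do the work---as an acknowledged hole.
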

\section{Tamely ramified coverings of formal schemes}\label{sec:tameness-formal}
We recall a few definitions from \cite[\S.3, \S.4]{Grothendieck/Tame}.
\begin{Definition}[{\cite[3.1.4, 3.1.5, p.~45]{Grothendieck/Tame}}]
Let $\mathfrak{X}$ be a locally noetherian formal scheme.  If $D$ is an effective divisor on $\mathfrak{X}$ (that is, defined by an invertible coherent sheaf of ideals in $\mathcal{O}_{\mathfrak{X}}$, \cite[\S.21]{EGA4}), then  for any point $x\in \Supp(D)$, the localization $D_x$ is an effective divisor  on $\Spec \mathcal{O}_{\mathfrak{X},x}$.  The divisor $D$ is said to have \emph{(strict) normal crossings} (resp.~to be \emph{regular}) if $D_x$ is a (strict) normal crossings divisor (resp.~is a regular divisor) on $\Spec \mathcal{O}_{\mathfrak{X},x}$ for all $x\in \Supp(D)$.  A finite set $\{D_i\}_{i\in I}$ of effective divisors on $\mathfrak{X}$ is said to have (strict) normal crossings, if for every $x\in \mathfrak{X}$ the family $\{(D_i)_x\}_{i\in I}$ has (strict) normal crossings.
\end{Definition}
%
%
\begin{Definition}[{\cite[3.2.2, p.~49]{Grothendieck/Tame}}]
A morphism $f:\mathfrak{Y}\rightarrow \mathfrak{X}$  between two locally noetherian formal schemes is an \emph{\'etale covering}  if $f$ is finite, $f_*{\mathcal{O}_\mathfrak{Y}}$ is a locally free $\mathcal{O}_{\mathfrak{X}}$-module, and for all $x\in \mathfrak{X}$, the induced map of (usual) schemes $\mathfrak{Y}\times_{\mathfrak{X}}  \Spec k(x)\rightarrow  \Spec k(x)$ is \'etale.
As in the scheme case, we write $\Rev(\mathfrak{X})$ for the category of all finite maps to $\mathfrak{X}$ and $\RevEt(\mathfrak{X})$ for the category of all \'etale coverings of $\mathfrak{X}$.
\end{Definition}
\begin{Definition}[{\cite[4.1.2, p.~52]{Grothendieck/Tame}}] \label{defn:tame-formal}  
A locally noetherian formal scheme $\mathfrak{X}$ is said to be \emph{normal} if all stalks of $\mathcal{O}_{\mathfrak{X}}$ are normal. Let $\mathfrak{X}$ be normal and let $D$ be a divisor with normal crossings on $\mathfrak{X}$. A finite morphism $f:\mathfrak{Y}\rightarrow \mathfrak{X}$ is said to be  a \emph{tamely ramified covering with respect to $D$}, if for every $x\in \mathfrak{X}$ the finite morphism of schemes
\[\Spec((f_*\mathcal{O}_{\mathfrak{Y}})_x)\rightarrow \Spec(\mathcal{O}_{\mathfrak{X},x})\]
is tamely ramified along the normal crossings divisor $D_x$ in $\Spec \mathcal{O}_{\mathfrak{X},x}$.

We write $\Rev^D(\mathfrak{X})$ for the category of tamely ramified coverings of $\mathfrak{X}$ with respect to $D$.
\end{Definition}

The first main ingredient in the proof of \cref{thm:main-reformulation} is the following lifting result.
\begin{Theorem}[{\cite[Thm.~4.3.2, p.~58]{Grothendieck/Tame}}]\label{formal-lifting}
Let $\mathfrak{X}$ be a locally noetherian, normal formal scheme  and let $(D_i)_{i\in I}$ be a finite set of regular divisors  with normal crossings on $\mathfrak{X}$. Write $D:=\sum_{i\in I} D_i$. Let $\mathcal{J}$ be an ideal of definition of $\mathfrak{X}$ with the following properties.
\begin{enumerate}[label=\emph{(\alph*)}, ref=(\alph*)]
	\item\label{formal-lifting-cond1} The scheme $X_0:=(\mathfrak{X},\mathcal{O}_{\mathfrak{X}}/\mathcal{J})$ is normal;
	\item\label{formal-lifting-cond2} the inverse images $D_{i,0}$ of the divisors $D_i$ on $X_0$ exist, are regular, and the family $(D_{i,0})_{i\in I}$ has normal crossings. Write $D_0:=\sum_{i\in I}D_{i,0}$.
\end{enumerate}
Then the  restriction functor
\[
	\Rev(\mathfrak{X})\rightarrow \Rev(X_0), \ (\mathfrak{Z}\rightarrow \mathfrak{X})\mapsto (\mathfrak{Z}\times_{\mathfrak{X}}X_0\rightarrow X_0)
\] 
restricts to an equivalence of categories
\[\Rev^D(\mathfrak{X})\rightarrow \Rev^{D_0}(X_0).\]
\end{Theorem}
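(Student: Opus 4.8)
The plan is to pass from the formal scheme $\mathfrak{X}$ to its infinitesimal thickenings and to solve the resulting deformation problem one square-zero step at a time, the ramification along $D$ being handled by Abhyankar's lemma. Writing $X_n:=(\mathfrak{X},\mathcal{O}_{\mathfrak{X}}/\mathcal{J}^{n+1})$, so that $X_0$ is the given normal scheme and $\mathfrak{X}=\varinjlim_n X_n$, I would first observe that a tamely ramified covering of $\mathfrak{X}$ is automatically finite locally free: away from $\Supp(D)$ it is \'etale, while along $D$ the base is regular (this is built into the normal crossings hypothesis) and a tame covering is Cohen--Macaulay there, so flatness follows from the local criterion. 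Consequently $\Rev^D(\mathfrak{X})$ embeds into the category of finite locally free $\mathcal{O}_{\mathfrak{X}}$-algebras, and the standard equivalence between coherent modules on a locally noetherian formal scheme and compatible systems of coherent modules on the $X_n$ identifies finite locally free coverings of $\mathfrak{X}$ with compatible systems of finite locally free coverings of the $X_n$. Since both normality and tameness with respect to $D$ are conditions on the stalks of $\mathcal{O}_{\mathfrak{X}}$, which lie over the common topological space of all the $X_n$, this reduces the theorem to showing that each inclusion $X_n\hookrightarrow X_{n+1}$ induces an equivalence on tamely ramified coverings. Note that $(\mathcal{J}^{n+1})^2\subseteq \mathcal{J}^{n+2}$, so each such step is a square-zero thickening.

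The engine for a single square-zero step is the topological invariance of the \'etale site \cite{SGA1}: for a nilpotent thickening, pullback is an equivalence $\RevEt(X_{n+1})\to \RevEt(X_n)$, every \'etale covering and every morphism of such lifting uniquely. This disposes of the situation over the complement of $D$ and, more importantly, provides the rigidity that will force both existence and uniqueness of lifts once the ramified part has been straightened out.

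To treat the ramification I would exploit that the components $D_i$ are given as divisors on $\mathfrak{X}$, so their local equations $t_i$ are defined on every $X_n$ and lift canonically through the thickenings. The associated Kummer coverings, locally $z_i^{m_i}=t_i$ with exponents $m_i$ prime to the residue characteristics and hence invertible, are finite flat and tamely ramified, and by construction lift canonically and compatibly along $X_n\hookrightarrow X_{n+1}$. Abhyankar's lemma says that any tamely ramified covering becomes \'etale after pullback along a suitable such Kummer covering; by the previous paragraph this \'etale covering then lifts uniquely to $X_{n+1}$, and descending along the already-lifted Kummer covering produces a lift of the original tame covering. The same descent applied to morphisms, combined with the \'etale rigidity, yields full faithfulness. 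Thus restriction $\Rev^{D_{n+1}}(X_{n+1})\to \Rev^{D_n}(X_n)$ is an equivalence, and assembling the steps, that is passing to the limit, proves the theorem.

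I expect the genuine difficulty to lie entirely in the ramified locus. First, Abhyankar's lemma and the Kummer descent are local statements, so one must choose the local Kummer covers and their descent data coherently and verify independence of these choices in order to globalize; this is the essential bookkeeping. Second, descent along a finite flat Kummer cover a priori returns only a finite flat covering, and one must check that the object $\mathfrak{Z}$ obtained over $\mathfrak{X}$ in the limit is again \emph{normal} and \emph{tamely ramified} with respect to $D$, not merely flat. Normality is recovered from Serre's criterion, since $\mathfrak{X}$ is normal and regular along $D$ and the covering stays generically \'etale; but matching the ramification datum exactly across the thickening, so that the tameness indices $m_i$ are neither created nor destroyed, is where the argument must be most careful.
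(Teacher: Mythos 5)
First, a point of calibration: the paper does not prove this statement at all --- it is quoted verbatim from Grothendieck--Murre \cite[Thm.~4.3.2]{Grothendieck/Tame} and used as a black box, so there is no in-paper proof to compare against. Your outline does track the strategy of the original argument (reduction to finite locally free algebras via the equivalence between coherent sheaves on $\mathfrak{X}$ and compatible systems on the thickenings $X_n$, topological invariance of the \'etale site for the unramified part, Abhyankar's lemma plus the canonical lifting of Kummer coverings for the ramified part), so the skeleton is the right one.

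That said, as a proof it has a genuine structural gap beyond the difficulties you flag yourself. Your dévissage asserts that the theorem reduces to showing that each square-zero thickening $X_n\hookrightarrow X_{n+1}$ induces an equivalence $\Rev^{D_{n+1}}(X_{n+1})\to\Rev^{D_n}(X_n)$, but for $n\geq 1$ the scheme $X_n$ is non-reduced, whereas tameness in the sense of \cref{defn:categories} and \cref{defn:tame-formal} is defined only over a \emph{normal} base (it refers to normality of the total space and to extensions of discrete valuation rings at codimension-one points). So the intermediate categories you induct through are not defined, and the remark that ``normality and tameness are conditions on the stalks of $\mathcal{O}_{\mathfrak{X}}$'' does not repair this: those conditions live on $\mathfrak{X}$ and on $X_0$ only, not on the individual $X_n$. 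The induction has to be recast as a lifting statement for finite locally free $\mathcal{O}_{X_n}$-algebras equipped with an \'etale-local presentation as (quotients of) Kummer coverings, with tameness imposed only at the two ends. Relatedly, the two issues you defer --- choosing the local Kummer presentations and their descent data coherently so as to globalize, and verifying that the limit object $\mathfrak{Z}$ is normal and tamely ramified with the same ramification indices --- are not bookkeeping; they are where essentially all the content of the theorem sits (note that the present paper needs its \cref{lemma:completion} and \cref{lemma:excellent} precisely to control normality under completion in an analogous situation). As submitted, the proposal is a correct plan of attack rather than a proof.
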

\section{Some facts about formal completion}\label{sec:completion}
The following facts are probably well-known, but we could not find a reference.

\begin{Lemma}\label{lemma:completion}
Let $A$ be an excellent ring and let $I\subset A$ be an ideal. Assume that $A^*:=\varprojlim_n A/I^n$ is excellent (see \cref{rem:excellenceOfCompletion}). Write $X:=\Spec A$, $Y:=\Spec A/I$ and $\mathfrak{X}:=\Spf A^*$. 	
Then $X$ is normal in some open neighborhood of $Y$ if and only if $\mathfrak{X}$ is normal.	
\end{Lemma}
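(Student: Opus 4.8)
The plan is to reduce the assertion to a pointwise comparison of the stalks of $\mathcal{O}_{\mathfrak{X}}$ with the local rings of $X$ along $Y$, and then to globalize using that the normal locus of an excellent scheme is open. First I would identify the points: since $A^*/IA^*\cong A/I$, every open prime of $A^*$ contains $IA^*$, so the underlying space of $\mathfrak{X}=\Spf A^*$ is homeomorphic to $V(IA^*)=\Spec(A/I)=Y$. A point $x\in\mathfrak{X}$ thus corresponds to a prime $\mathfrak{p}\subset A$ containing $I$, i.e.\ to a point of $Y\subset X$, and by \cref{defn:tame-formal} the formal scheme $\mathfrak{X}$ is normal exactly when $\mathcal{O}_{\mathfrak{X},x}$ is normal for every such $x$.

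For the globalization, note that as $A$ is excellent the non-normal locus $Z\subset X$ is closed. If $A_{\mathfrak{p}}$ is normal for every $\mathfrak{p}\in V(I)$, then $Z\cap Y=\emptyset$, so $X\setminus Z$ is an open neighbourhood of $Y$ on which $X$ is normal; the converse implication is immediate. Hence ``$X$ is normal near $Y$'' is equivalent to ``$A_{\mathfrak{p}}$ is normal for all $\mathfrak{p}\supseteq I$'', and together with the point identification the lemma reduces to the pointwise equivalence, for $x\leftrightarrow\mathfrak{p}$,
\[
A_{\mathfrak{p}}\ \text{is normal}\quad\Longleftrightarrow\quad \mathcal{O}_{\mathfrak{X},x}\ \text{is normal}.
\]

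I would establish this by passing to completions. Let $\mathfrak{P}\subset A^*$ be the open prime corresponding to $x$. The isomorphisms $A/I^n\cong A^*/I^nA^*$ localize to $(A/I^n)_{\mathfrak{p}}\cong(A^*/I^nA^*)_{\mathfrak{P}}$, and since $I\subseteq\mathfrak{p}$, completing first $I$-adically and then $\mathfrak{p}$-adically shows that $A_{\mathfrak{p}}$ and $A^*_{\mathfrak{P}}$ have canonically isomorphic $\mathfrak{p}$-adic completions; on the other hand, the structure theory of noetherian formal schemes identifies the completion of the formal stalk $\mathcal{O}_{\mathfrak{X},x}$ with that of $A^*_{\mathfrak{P}}$. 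Thus $A_{\mathfrak{p}}$ and $\mathcal{O}_{\mathfrak{X},x}$ share a common completion. Now $A_{\mathfrak{p}}$ is excellent, being a localization of $A$, and $\mathcal{O}_{\mathfrak{X},x}$ is excellent by the standing hypothesis on $A^*$ (cf.\ \cref{rem:excellenceOfCompletion}); and for an excellent local ring, normality is equivalent to normality of its completion --- by faithfully flat descent along the completion map in one direction, and by regularity of that map in the other. Applying this to both rings, each side of the displayed equivalence holds if and only if the common completion is normal, which proves the equivalence and hence the lemma.

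The main obstacle is the control of the formal stalk $\mathcal{O}_{\mathfrak{X},x}$: it is a filtered colimit of $I$-adic completions of localizations of $A^*$, so it is neither a localization of $A^*$ nor literally the completion of $A_{\mathfrak{p}}$, and the comparison must therefore be carried out after completion. It is precisely in order to test normality of $\mathcal{O}_{\mathfrak{X},x}$ on its completion that one must assume $A^*$ excellent --- the completion of an excellent ring along an ideal need not be excellent, which is why this appears as a hypothesis rather than being automatic. An alternative to invoking excellence of the stalk would be to check directly that the natural map $A_{\mathfrak{p}}\to\mathcal{O}_{\mathfrak{X},x}$ is regular, using that completion-along-an-ideal maps of excellent rings are regular and that regularity is inherited by the relevant localizations and filtered colimits; normality then transfers in both directions along this faithfully flat map.
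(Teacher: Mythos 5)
Your reduction to the pointwise statement, the identification of the points of $\mathfrak{X}$ with the primes of $A$ containing $I$, and the use of the openness of the normal locus of $\Spec A$ are exactly as in the paper, as is the comparison of $A_{\mathfrak{p}}$ with $A^*_{\mathfrak{P}}$ via their (isomorphic) maximal-adic completions. The one genuine gap is the assertion that $\mathcal{O}_{\mathfrak{X},x}$ is excellent ``by the standing hypothesis on $A^*$'': that hypothesis (and \cref{rem:excellenceOfCompletion}) gives excellence of $A^*$, hence of its localizations and, by the quoted results, of $I$-adic completions of those, but $\mathcal{O}_{\mathfrak{X},x}=\varinjlim_{f\notin\mathfrak{P}}A_{\{f\}}$ is a filtered colimit of such completions and is neither a localization of $A^*$ nor the completion of one; excellence is not known to pass to such colimits, and you flag exactly this difficulty in your closing paragraph without resolving it. The step where this matters is the implication ``$\mathcal{O}_{\mathfrak{X},x}$ normal $\Rightarrow$ its completion normal'', i.e.\ precisely the direction ``$\mathfrak{X}$ normal $\Rightarrow$ $X$ normal near $Y$''.

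The paper closes this hole without ever needing excellence of the formal stalk: it uses the factorization $A^*_{\mathfrak{P}}\xrightarrow{\lambda}\mathcal{O}_{\mathfrak{X},x}\xrightarrow{\mu}(A^*_{\mathfrak{P}})^*$, where $(-)^*$ is $I$-adic completion and $\lambda,\mu$ are faithfully flat. If $\mathcal{O}_{\mathfrak{X},x}$ is normal, then $A^*_{\mathfrak{P}}$ is normal by faithfully flat descent along $\lambda$, hence $A_{\mathfrak{p}}$ is normal by your completion comparison together with \cref{lemma:excellent}; conversely, if $A_{\mathfrak{p}}$ is normal, then so is the excellent local ring $A^*_{\mathfrak{P}}$, hence so is its $I$-adic completion $(A^*_{\mathfrak{P}})^*$ by \cref{lemma:excellent}, and $\mathcal{O}_{\mathfrak{X},x}$ is normal by descent along $\mu$. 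In this way normality only ever has to be tested on localizations of the excellent rings $A$ and $A^*$ and on their completions, so \cref{lemma:excellent} suffices. You should replace the appeal to excellence of $\mathcal{O}_{\mathfrak{X},x}$ (and the sketched alternative via regularity of $A_{\mathfrak{p}}\to\mathcal{O}_{\mathfrak{X},x}$, which runs into the same unproven claim about filtered colimits) by this two-sided descent argument; the rest of your write-up stands as is.
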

\begin{Remark}\label{rem:excellenceOfCompletion}
As a special case of \cite[7.4.8, p.~203]{EGA4}, Grothendieck asks whether $A^*$ is excellent whenever $A$ is. O.~Gabber  has proved this result unconditionally (\cite[Remark 3.1.1]{Temkin/Desingularization}, \cite[Remark 1.2.9]{Kedlaya/Excellent}). Unfortunately, to our knowledge, the proof is not yet available in written form. 

On the other hand, it is proved in \cite{Valabrega} that if $A$ is a finitely generated algebra over a field, then $A^*$ is excellent. We shall  apply \cref{lemma:completion}  only in this situation.
\end{Remark}
In the sequel, the following lemma is crucially used. 
\begin{Lemma}[{\cite[7.8.3, (v), p.~215]{EGA4}}]\label{lemma:excellent} 
Let $(R,\mathfrak{m})$ be an excellent local ring and let $J\subset \mathfrak{m}$ be an  ideal. Then $R$ is normal if and only if the $J$-adic completion $\varprojlim_i R/J^i$ is normal.
\end{Lemma}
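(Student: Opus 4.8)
The plan is to exhibit the completion map $\phi\colon R\to \widehat R:=\varprojlim_i R/J^i$ as a faithfully flat \emph{regular} morphism of noetherian local rings, and then to transfer normality across $\phi$ via Serre's criterion. First I record the structural facts. Since $R$ is excellent it is noetherian, so $\widehat R$ is noetherian and $\phi$ is flat (flatness of the $J$-adic completion of a noetherian ring is standard). Because $J\subseteq \mathfrak{m}$, the ideal $J\widehat R$ lies in the Jacobson radical of the $J$-adically complete ring $\widehat R$; as $\widehat R/J\widehat R\cong R/J$ is local, every maximal ideal of $\widehat R$ contains $J\widehat R$ and corresponds to one of $R/J$, so $\widehat R$ is local with maximal ideal $\mathfrak{m}\widehat R$. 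Moreover $\widehat R/\mathfrak{m}\widehat R\cong R/\mathfrak{m}\neq 0$, which shows on the one hand that $\phi$ is a local homomorphism and on the other that the flat map $\phi$ is faithfully flat.

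The one genuinely nontrivial ingredient is that $\phi$ is a \emph{regular} morphism, i.e. flat with geometrically regular fibers, and this is exactly where excellence enters. An excellent ring is in particular a G-ring, and for a G-ring the $J$-adic completion morphism $R\to\widehat R$ is regular for \emph{every} ideal $J$; this is the core stability result of the theory of excellent rings (\cite[\S 7]{EGA4}), and it is what makes the present lemma a special case of \cite[7.8.3]{EGA4}. I expect this to be the main obstacle: it is the only step that uses the excellence hypothesis, and it rests on the nontrivial behaviour of formal fibers under completion rather than on any formal manipulation.

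With regularity of $\phi$ in hand, the conclusion is formal. By Serre's criterion, normality of a noetherian ring is equivalent to the conjunction of $(R_1)$ and $(S_2)$. Since $\phi$ is a flat local homomorphism whose fibers are geometrically regular, those fibers satisfy $(R_i)$ and $(S_i)$ for all $i$; hence the transfer of Serre's conditions along a flat local homomorphism (\cite[6.5.4]{EGA4}) gives that $\widehat R$ satisfies $(R_1)$ and $(S_2)$ if and only if $R$ does. Therefore $R$ is normal if and only if $\widehat R$ is normal, as claimed. I note in passing that only the ascent direction ($R$ normal $\Rightarrow \widehat R$ normal) actually consumes the excellence hypothesis, through the geometric normality of the fibers; the descent direction ($\widehat R$ normal $\Rightarrow R$ normal) needs nothing beyond the faithful flatness of $\phi$ established in the first paragraph.
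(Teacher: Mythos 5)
The paper contains no internal proof of this lemma: it is quoted verbatim from EGA~IV, Scholie~7.8.3\,(v), so the only ``proof'' on the paper's side is that citation. Your argument is correct and is essentially a reconstruction of the standard proof behind that reference. The structural steps are all sound: noetherianity and flatness of the $J$-adic completion; $J\widehat{R}$ lies in the Jacobson radical of the complete ring $\widehat{R}$, and $\widehat{R}/J\widehat{R}\cong R/J$ is local, so $\widehat{R}$ is local with maximal ideal $\mathfrak{m}\widehat{R}$ and $\phi$ is a faithfully flat local homomorphism. You also correctly isolate the one genuinely deep input, namely that for a G-ring the $J$-adic completion morphism is regular for an \emph{arbitrary} ideal $J$ (not merely for $J=\mathfrak{m}$, where it is the definition of a G-ring); that is precisely the content of the cited Scholie, it is the only place excellence enters, and your acknowledgement that you are importing it rather than proving it is appropriate, since proving it would amount to reproving the EGA result the paper cites. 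Two bookkeeping remarks: the ascent of Serre's conditions $(S_k)$ and $(R_k)$ along a flat local homomorphism with regular fibers is EGA~IV, 6.5.1 and 6.5.2 respectively, while 6.5.4 is the resulting statement for normality (and reducedness) itself --- so you could cite 6.5.4 directly and skip the explicit Serre-criterion bookkeeping; and for the descent direction ($\widehat{R}$ normal $\Rightarrow$ $R$ normal) you could equally invoke Matsumura, 21.E, which is the same reference the paper itself uses for faithfully flat descent of normality in the proof of \cref{lemma:completion}. Your closing observation that only the ascent direction consumes excellence is likewise correct.
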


We prove the main result of this section.
\begin{proof}[{Proof of \cref{lemma:completion}}] 
We use the notations from the statement of \cref{lemma:completion}.  For a prime ideal $\mathfrak{p}\in \Spec A$ containing $I$, denote by $\mathfrak{p}^*$ the corresponding prime ideal in $A^*$ and also the corresponding point of $\mathfrak{X}=\Spf(A^*)$. Since the normal locus of $\Spec A$ is open (\cite[Scholie 7.8.3, (iv), p.~214]{EGA4}), we need to show that for a prime ideal $\mathfrak{p}\subset A$ containing $I$, the local ring $A_{\mathfrak{p}}$ is normal  if and only if $\mathcal{O}_{\mathfrak{X},\mathfrak{p}^*}$ is normal. 

Let $\mathfrak{p}\in \Spec A$ be a prime ideal containing $I$. The canonical map  of local rings $A_{\mathfrak{p}}\rightarrow A^*_{\mathfrak{p}^*}$ becomes an isomorphism $\widehat{A_\mathfrak{p}}\xrightarrow{\cong} \widehat{A^*_{\mathfrak{p}^*}}$ after completion with respect to the maximal ideals (\cite[24.B, D, p.~173]{Matsumura/CommutativeAlgebra}).  As both $A$ and $A^*$ are excellent by assumption, the same is true for the localizations $A_\mathfrak{p}$ and $A^*_{\mathfrak{p}^*}$. Thus, \cref{lemma:excellent}   applied to the local rings $A_\mathfrak{p}$ and $A^*_{\mathfrak{p}^*}$, with the topologies defined by their maximal ideals, yields that $A_\mathfrak{p}$ is normal if and only if $\widehat{A_{\mathfrak{p}}}\cong \widehat{A^*_{\mathfrak{p}^*}}$ is normal, if and only if  $A^*_{\mathfrak{p}^*}$ is normal.

Let $A^*_{\mathfrak{p}^*}\rightarrow (A^*_{\mathfrak{p^*}})^*$ be the $I$-adic completion of the localization $A^*_{\mathfrak{p}^*}$ of $A^*$ at $\mathfrak{p}^*$.  It factors
\[
	A^*_{\mathfrak{p}^*}\xrightarrow{\lambda}\mathcal{O}_{\mathfrak{X},\mathfrak{p}^*}\xrightarrow{\mu} (A^*_{\mathfrak{p}^*})^*,
\] 
with $\lambda$ and $\mu$ both faithfully flat (\cite[3.1.2, p.~44]{Grothendieck/Tame}). Indeed, for $f\in A^*$, write $S_f:=\{1,f,f^2,\ldots\}$, and $A_{\{f\}}$ for the $I$-adic completion of $S_f^{-1}A$. Then $\mathcal{O}_{\mathfrak{X},\mathfrak{p}^*}=\varinjlim_{f\not\in \mathfrak{p}^*}A_{\{f\}}$ (\cite[10.1.5, p.~182]{EGA1}). Faithful flatness of $\lambda$ (resp.~$\mu$) now follows from \cite[Ch.0, 6.2.3, p.~56]{EGA1} together with \cite[Ch.~0, 7.6.13, p.~74]{EGA1} (resp.~\cite[Ch.~0, 7.6.18, p.~75]{EGA1}).

We complete the proof: If $\mathcal{O}_{\mathfrak{X},\mathfrak{p}^*}$ is normal, then by faithfully flat descent $A^*_{\mathfrak{p}^*}$ is normal (\cite[21.E, p.~156]{Matsumura/CommutativeAlgebra}), and thus, as we saw above, $A_\mathfrak{p}$ is normal as well.
Conversely, if $A_\mathfrak{p}$ is normal, then the excellent ring $A^*_{\mathfrak{p}^*}$ is normal,  and so is its $I$-adic completion $(A^*_{\mathfrak{p}^*})^*$  (\cref{lemma:excellent}).  By faithfully flat descent, $\mathcal{O}_{\mathfrak{X},\mathfrak{p}^*}$ is normal as well.
%
\end{proof}

\begin{Corollary}\label{cor:formal-lifting-concrete}
Let $k$ be a field and let $X$ be a normal, separated, finite type $k$-scheme with $D\subset X$ a strict normal crossings divisor. Let $Y\subset X$ be a normal closed subscheme, such that the inverse image $D|_Y$ of $D$ on $Y$ exists and is a strict normal crossings divisor, and let $X_Y$ be the formal completion of $X$ along $Y$. Then
\begin{enumerate}[label=\emph{(\alph*)}, ref=(\alph*)]
	\item The formal scheme $X_Y$ is normal, the inverse image $D|_{X_Y}$ of $D$ on $X_Y$ exists and is a normal crossings divisor with regular components.
	\item\label{formal-lifting-concrete-1} The  functor  $\Rev^{D|_{X_Y}}(X_Y)\rightarrow \Rev^{D|_Y}(Y)$  of restriction is an equivalence. 
	\item\label{formal-lifting-concrete-2} If $\mathfrak{Z}\rightarrow X_Y$ is a tamely ramified covering with respect to $D|_{X_Y}$, then $\mathfrak{Z}$ is a normal formal scheme. 
\end{enumerate}
\end{Corollary}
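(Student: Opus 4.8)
The plan is to prove the three statements in order, concentrating all the real work in the first one and then reading off parts \ref{formal-lifting-concrete-1} and \ref{formal-lifting-concrete-2} almost formally. All assertions are local on $X$, so I may assume $X=\Spec A$ with $A$ a finitely generated $k$-algebra and $Y=\Spec A/I$; then $A$ is excellent, and by Valabrega's theorem (\Cref{rem:excellenceOfCompletion}) so is the $I$-adic completion $A^*$, so that \Cref{lemma:completion} is available. Since $X$ is normal, that lemma gives at once that $\mathfrak X=X_Y=\Spf A^*$ is normal; as normality of a formal scheme is a condition on stalks (\Cref{defn:tame-formal}), this globalizes over an affine cover and settles the first half of the opening assertion.

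For the divisor, I first note that the local map $\mathcal O_{X,x}\to\mathcal O_{X_Y,x}$ is flat for each $x\in Y$, being assembled from the flat $I$-adic completion and the faithfully flat maps $\lambda,\mu$ of the proof of \Cref{lemma:completion}; hence local equations of the Cartier divisor $D$ pull back to non-zero-divisors and $D|_{X_Y}$ exists as an effective Cartier divisor. To pin down its local structure I work at a point $x\in\Supp(D)\cap Y=\Supp(D|_Y)$, where \Cref{defn:ncd} forces $\mathcal O_{X,x}$ to be regular. The proof of \Cref{lemma:completion} moreover shows that $\lambda$ and $\mu$ compose to an $I$-adic completion and therefore become isomorphisms after $\mathfrak m$-adic completion, giving a canonical isomorphism $\widehat{\mathcal O_{X_Y,x}}\cong\widehat{\mathcal O_{X,x}}$. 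The right-hand side is regular (it is the completion of the regular ring $\mathcal O_{X,x}$), so $\mathcal O_{X_Y,x}$ has regular completion and is therefore itself regular. Finally, if $t_1,\dots,t_r$ is the part of a regular system of parameters of $\mathcal O_{X,x}$ cutting out the branches of $D$ through $x$, then — since being part of a regular system of parameters is detected on the cotangent space, and $\mathfrak n/\mathfrak n^2$ is preserved by completion — the images of the $t_j$ form part of a regular system of parameters of $\mathcal O_{X_Y,x}$. Thus $D|_{X_Y}$ is strict normal crossings at $x$, in particular normal crossings with regular components (\Cref{rem:ncd}), completing the first assertion.

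Part \ref{formal-lifting-concrete-1} is now a direct application of \Cref{formal-lifting}: take $\mathcal J$ to be the ideal of definition of $X_Y$ cutting out $Y$, so $X_0=(\mathfrak X,\mathcal O_{\mathfrak X}/\mathcal J)=Y$. Its hypotheses hold — the $D_i|_{X_Y}$ are regular divisors with normal crossings by the first assertion, $X_0=Y$ is normal by hypothesis, and the $D_i|_Y$ exist, are regular and have normal crossings because $D|_Y$ is strict normal crossings — so the theorem yields the asserted equivalence $\Rev^{D|_{X_Y}}(X_Y)\to\Rev^{D|_Y}(Y)$. Part \ref{formal-lifting-concrete-2} follows straight from the definitions: for $f:\mathfrak Z\to X_Y$ tame with respect to $D|_{X_Y}$ and any $x\in X_Y$, the normality requirement built into tameness (\Cref{defn:tame-formal} via \Cref{defn:categories}) says that the finite $\mathcal O_{X_Y,x}$-algebra $(f_*\mathcal O_{\mathfrak Z})_x$ is a normal ring; every stalk $\mathcal O_{\mathfrak Z,z}$ with $z$ over $x$ is a localization of this semilocal ring, hence normal, and as this exhausts $\mathfrak Z$ the formal scheme $\mathfrak Z$ is normal.

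The one genuinely delicate point is the normal crossings claim in the first assertion: it requires understanding the stalks $\mathcal O_{X_Y,x}$ of the completion well enough to transport both regularity and the ``part of a regular system of parameters'' property across the passage to $\mathfrak X$, and it is precisely here that the transversality hypothesis — that $D|_Y$ is itself a strict normal crossings divisor, which is what guarantees $\mathcal O_{X,x}$ is regular at the relevant points — enters. Once the identification $\widehat{\mathcal O_{X_Y,x}}\cong\widehat{\mathcal O_{X,x}}$ and the cotangent-space comparison are in place, the remaining two parts are formal.
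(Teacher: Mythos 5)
Your parts (a) and (b) are correct. For (a) you reprove by hand what the paper simply cites from Grothendieck--Murre (4.1.4): you use flatness of $\mathcal{O}_{X,x}\to\mathcal{O}_{X_Y,x}$ to get existence of $D|_{X_Y}$, the identification $\widehat{\mathcal{O}_{X_Y,x}}\cong\widehat{\mathcal{O}_{X,x}}$ to transport regularity, and the cotangent-space criterion to transport ``part of a regular system of parameters''; this is a legitimate, more self-contained route (modulo the standard fact that the stalk of the formal completion has the same $\mathfrak{m}$-adic completion as $\mathcal{O}_{X,x}$, which is essentially contained in the proof of \cref{lemma:completion}). Part (b) is the same application of \cref{formal-lifting} as in the paper.

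Part (c), however, has a genuine gap. You claim that every stalk $\mathcal{O}_{\mathfrak{Z},z}$ with $z$ over $x$ is a \emph{localization} of the semilocal ring $(f_*\mathcal{O}_{\mathfrak{Z}})_x$. This is true for finite morphisms of ordinary schemes, but it is false for formal schemes: writing locally $\mathfrak{X}=\Spf A^*$ and $\mathfrak{Z}=\Spf B$ with $B$ finite over $A^*$, one has $(f_*\mathcal{O}_{\mathfrak{Z}})_x=B\otimes_{A^*}\mathcal{O}_{\mathfrak{X},x}$, whereas $\mathcal{O}_{\mathfrak{Z},z}=\varinjlim_{g\in B\setminus\mathfrak{q}}(B_g)^{\wedge I}$ is a direct limit of \emph{$I$-adic completions} of localizations $B_g$ at elements $g$ which need not come from $A^*$. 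These completions introduce sections with unbounded denominators along the fibre that do not lie in any localization of $B\otimes_{A^*}\mathcal{O}_{\mathfrak{X},x}$. Concretely, for $A^*=k[s][[t]]$ and $B=A^*[u]/(u^2-1-s)\cong k[u][[t]]$, at the point $z$ of the fibre over the origin given by $(u-1,t)$, the section $\sum_{n}t^n(u+1)^{-n^2}$ of $\mathcal{O}_{\mathfrak{Z}}$ over $\mathfrak{D}(u+1)$ lies in $\mathcal{O}_{\mathfrak{Z},z}$ but not in the localization of $B\otimes_{A^*}\mathcal{O}_{\mathfrak{X},x}$ at the corresponding maximal ideal (an order-of-pole comparison at $u=-1$ rules out any presentation $c/c'$ with $c,c'$ having coefficients in a fixed localization of $k[s]$). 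The map from that localization to $\mathcal{O}_{\mathfrak{Z},z}$ is only faithfully flat, and normality does not ascend along faithfully flat maps without control of the fibres --- which is exactly where excellence must enter. The paper's argument instead sandwiches $\mathcal{O}_{\mathfrak{Z},z}$ between $B_{\mathfrak{q}}$ and its $I$-adic completion $(B_{\mathfrak{q}})^*$ by faithfully flat maps, invokes excellence of $B$ together with \cref{lemma:excellent} to see that $(B_{\mathfrak{q}})^*$ is normal, and then \emph{descends} normality along the faithfully flat map $\mathcal{O}_{\mathfrak{Z},z}\to(B_{\mathfrak{q}})^*$. Your proof needs to be repaired along these lines; the purely formal deduction from the normality clause in the definition of tameness does not suffice.
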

\begin{proof}
(a) $X_Y$ is locally noetherian and normal, according to \cref{lemma:completion} (here we use the fact that $X$ is of finite type over a field).  By \cref{rem:ncd} the components $\{D_i\}_{i\in I}$ of $D$ are regular divisors. Thus, according to \cite[4.1.4, p.~53]{Grothendieck/Tame}, if $j:X_Y\rightarrow X$ is the canonical map of locally ringed spaces, then $(j^*D_i)_{i\in I}$ is a family of regular divisors with normal crossings on the formal scheme $X_Y$.
			
(b)  The condition \ref{formal-lifting-cond2} of \cref{formal-lifting} is then fulfilled, as we assume that $D|_Y$ is a strict normal crossings divisor.  Thus \cref{formal-lifting} applies and \cref{cor:formal-lifting-concrete}, \ref{formal-lifting-concrete-1} follows.
		
(c) Let $f:\mathfrak{Z}\rightarrow X_Y$ be a tamely ramified covering. To prove that $\mathfrak{Z}$ is normal, we may assume that $X=\Spec A$ and $Y=\Spec A/I$. Let $A^*$ be the $I$-adic completion of $A$, so that $X_Y=\Spf A^*$. Let $B$ be the finite $A^*$-algebra such that $\mathfrak{Z}=\Spf(B)$. As  $X_Y$ is normal, $A^*$ is also normal (\cite[3.1.3, p.~44]{Grothendieck/Tame}). We can apply \cite[Lemma 4.1.3, p.~52]{Grothendieck/Tame}, which says that the fact that $f$ is tamely ramified with respect to $D$ is equivalent to the fact that the induced map $\Spec B\rightarrow \Spec A^*$ is tamely ramified with respect to the divisor on $\Spec A^*$ corresponding to $D$. In particular, $B$ is normal. As in \cref{lemma:completion}, for every $z\in \mathfrak{Z}$, corresponding to a prime ideal $\mathfrak{p}\subset B$ containing $IB$, we have a sequence of faithfully flat maps \[B_\mathfrak{p}\rightarrow \mathcal{O}_{\mathfrak{Z},z}\rightarrow (B_\mathfrak{p})^*,\] where $(-)^*$ denotes $IB$-adic completion. As $A$ is of finite type over a field, $A$ is excellent, so $A^*$  is excellent (see \cref{rem:excellenceOfCompletion}), and hence so are the finite $A$-algebra $B$ and its localization $B_{\mathfrak{p}}$. \Cref{lemma:excellent} implies that $(B_\mathfrak{p})^*$ is normal, so $\mathcal{O}_{\mathfrak{Z},z}$ is normal as well.
\end{proof}

\section{Proof of \cref{thm:main}}\label{sec:proof}
We saw that \cref{thm:main} is equivalent to \cref{thm:main-reformulation}.\\

Let $X,Y, D$ be as in \cref{thm:main-reformulation}.  Denote by $X_Y$ the completion of $X$ along $Y$.  In \cref{cor:formal-lifting-concrete} we proved that $X_Y$ is a normal formal scheme.

Restriction gives a sequence of functors
\[
	\Rev(X)\rightarrow\Rev(X_Y)\rightarrow \Rev(Y).
\]
According to \cite[Cor.~4.1.4, p.~53]{Grothendieck/Tame} and \cref{cor:formal-lifting-concrete} this sequence restricts to
\[
	\Rev^D(X)\xrightarrow{F_1}\Rev^{D|_{X_Y}}(X_Y)\xrightarrow{F_2} \Rev^{D|_Y}(Y).
\]

We already saw in \cref{cor:formal-lifting-concrete} that $F_2$ is an equivalence. It remains to show that $F_1$ is fully faithful if $\Lef(X,Y)$ holds, and that $F_1$ is an equivalence if $\Leff(X,Y)$ holds and $Y$ meets very effective divisor on $X$.

The fact that enables us to use $\Lef(X,Y)$ and $\Leff(X,Y)$, which are conditions involving coherent locally free sheaves, is that tame coverings are flat. More precisely, an object $Z\rightarrow X$ of $\Rev^D(X)$  is a flat morphism according to \cite[Cor.~2.3.5, p.~39]{Grothendieck/Tame}, and an object $\mathfrak{Z}\rightarrow X_Y$ of $\Rev^{D|_Y}(X_Y)$ is a flat morphism of formal schemes (\cite[3.1.7, p.~39]{Grothendieck/Tame} together with \cite[4.1.3, p.~52]{Grothendieck/Tame}).

If $f:Z\rightarrow X$ is a tamely ramified cover with respect to $D$, then $f$ is flat, so $f_*\mathcal{O}_Z$ is a locally free $\mathcal{O}_X$-module of finite rank.  Morphisms in $\Rev^{D}(X)$ are thus defined by morphisms of $\mathcal{O}_X$-algebras which are locally free $\mathcal{O}_X$-modules. Assuming $\Lef(X,Y)$, this means that for every pair  of objects $Z,Z'\rightarrow X$ of $\Rev^{D}(X)$ the restriction map
\[\Hom_{X}(Z,Z')\xrightarrow{\cong}\Hom_{X_Y}(Z_Y,Z'_Y)\] 
is bijective. This shows that $F_1$ is fully faithful.

An object $f:\mathfrak{Z}\rightarrow X_Y$ of $\Rev^{D|_{X_Y}}(X_Y)$ is determined by the locally free $\mathcal{O}_{X_Y}$-algebra $f_*\mathcal{O}_{\mathfrak{Z}}$. Assuming $\Leff(X,Y)$, for every such object there exists an open subset $U\subset X$ containing $Y$ and a locally free sheaf $\mathcal{A}$ on $U$ such that $\mathcal{A}|_{X_Y}\cong f_*\mathcal{O}_{\mathfrak{Z}}$.  As $\Lef(X,Y)$ holds, we can lift the algebra structure from $f_*\mathcal{O}_{\mathfrak{Z}}$ to $\mathcal{A}$. 
Indeed, the global section of $(f_*\mathcal{O}_{\mathfrak{Z}} \otimes_{\mathcal{O}_{X_Y}}f_*\mathcal{O}_{\mathfrak{Z}})^\vee \otimes_{\mathcal{O}_{X_Y}}f_*\mathcal{O}_{\mathfrak{Z}}  $
defining the algebra structure lifts to a global section of  $(\mathcal{A} \otimes_{\mathcal{O}_U} \mathcal{A})^\vee \otimes_{\mathcal{O}_U} \mathcal{A}$, endowing $\mathcal{A}$ with an $\mathcal{O}_U$-algebra structure. 
Write $Z:=\bSpec \mathcal{A}$. We obtain a finite, flat morphism $g:Z\rightarrow U$  which restricts to $f:\mathfrak{Z}\rightarrow X_Y$.

According to \cref{cor:formal-lifting-concrete}, \ref{formal-lifting-concrete-2}, the formal scheme $\mathfrak{Z}$ is normal. We can identify $\mathfrak{Z}$ with the formal completion of $Z$ along the closed subset $g^{-1}(Y)$. As $Z$ is excellent, \cref{lemma:completion} implies that $Z$ is normal in an open neighborhood of $g^{-1}(Y)$.  Now the assumptions of \cite[Cor.~4.1.5, p.~54]{Grothendieck/Tame} are satisfied, from which follows that there is an open subset $V\subset U\subset X$ containing $Y,$ such that $g_V:Z\times_U V\rightarrow V$ is tamely ramified with respect to $V\cap D$. \Cref{lemma:codim2} shows that $g$ extends to an object of $\Rev^D(X)$ lifting $f$.

\begin{Lemma}\label{lemma:codim2}
Assume that $Y$ meets every effective divisor on $X$. If $U\subset X$ is an open subset containing $Y$, then restriction induces an equivalence
\begin{equation}\label{eq:codim2}
	\Rev^D(X)\xrightarrow{\cong}\Rev^{D\cap U}(U)
\end{equation}
\end{Lemma}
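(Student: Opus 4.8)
The plan is to reduce everything to a codimension estimate and then treat full faithfulness and essential surjectivity separately, the common engine being that $X\setminus U$ is small. \textbf{Key observation.} Since $Y\subset U$ and $Y$ meets every effective divisor on $X$, the closed set $X\setminus U$ cannot contain a divisor: a codimension-$1$ component of $X\setminus U$ would, as $X$ is regular and hence locally factorial, be an effective (Cartier) divisor disjoint from $U\supset Y$, contradicting the hypothesis. Hence $\codim(X\setminus U,X)\ge 2$. In particular every codimension-$1$ point of $X$, and so every generic point of a component of $\Supp(D)$, lies in $U$; and for any finite $Z\to X$ the complement $Z\setminus(Z\times_X U)$ has codimension $\ge 2$ in $Z$.

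For full faithfulness I would argue through $\O_X$-algebra structures. A finite morphism is affine, so for $Z,Z'\in\Rev^D(X)$ an $X$-morphism $Z\to Z'$ is the same datum as an $\O_X$-algebra homomorphism $f'_*\O_{Z'}\to f_*\O_Z$. Tame coverings are flat (\cite[Cor.~2.3.5, p.~39]{Grothendieck/Tame}), so $f_*\O_Z$ and $f'_*\O_{Z'}$ are locally free of finite rank, whence $\HHom_{\O_X}(f'_*\O_{Z'},f_*\O_Z)$ is again locally free. Because $X$ is normal and $\codim(X\setminus U,X)\ge 2$, restriction of sections of a locally free (hence reflexive) sheaf is bijective, $\Gamma(X,\HHom)\xrightarrow{\cong}\Gamma(U,\HHom)$; moreover the equations expressing that a section is a unital algebra homomorphism are equalities of sections of locally free, hence torsion-free, sheaves, so they descend from the dense open $U$ to $X$ and back. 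This yields the bijection $\Hom_X(Z,Z')\xrightarrow{\cong}\Hom_U(Z|_U,Z'|_U)$.

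For essential surjectivity I would extend a tame cover $W\to U$ by normalisation. Decomposing $W$ into its integral normal connected components, each has a function field $L_j$ finite separable over $K:=k(X)$, and I set $Z$ to be the normalisation of $X$ in $\prod_j L_j$. As $X$ is of finite type over $k$, it is excellent, so $Z\to X$ is finite and $Z$ is normal, and by construction each component of $Z$ dominates $X$; this gives conditions (i) and (iii) of \cref{defn:categories}. Uniqueness of normalisation identifies $Z\times_X U$ with $W$, both being the normalisation of $U$ in $\prod_j L_j$; in particular $Z\to X$ is étale over $U\setminus\Supp(D)$ and tamely ramified at the codimension-$1$ points of $D$, all of which lie in $U$ by the key observation, giving (iv). It remains to see that $Z\to X$ is étale over \emph{all} of $X\setminus\Supp(D)$: here I invoke Zariski--Nagata purity of the branch locus, valid since $X$ is regular and $Z$ is normal, which shows the branch locus is a pure codimension-$1$ closed set; each of its components has generic point in $U$, where the branch locus is contained in $\Supp(D)$, so the entire branch locus lies in $\Supp(D)$. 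Thus $Z\in\Rev^D(X)$ and $Z|_U\cong W$.

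The main obstacle is the essential surjectivity step, specifically controlling the branch locus of the normalisation over the codimension-$\ge 2$ set $X\setminus U$: this is exactly where purity of the branch locus is indispensable and where the \emph{regularity} of $X$ (not merely its normality) is used. The full faithfulness step is comparatively formal once flatness is invoked, reducing to the standard fact that sections of a locally free sheaf on a normal scheme extend across a closed subset of codimension $\ge 2$.
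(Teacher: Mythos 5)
Your proposal is correct and follows essentially the same route as the paper: the codimension $\ge 2$ observation, normalisation of $X$ in the covering as the inverse construction, and Zariski--Nagata purity of the branch locus (using regularity of $X$) to get \'etaleness over all of $X\setminus D$. The paper simply packages both directions at once by declaring normalisation a quasi-inverse functor, whereas you verify full faithfulness separately via Hartogs extension of sections of locally free sheaves across the codimension-$\ge 2$ set; this is an elaboration, not a different argument.
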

\begin{proof}
	By assumption $Y$ intersects every effective divisor on $X$, so
		\[\codim_X(X\setminus U)>1.\]
	Given a finite morphism $Z\rightarrow U$, tamely ramified over $U\cap D$, the normalization $Z_X\rightarrow X$ of $X$ in $Z$ is finite \'etale over $X\setminus D$, as $X$ is regular (``purity of the branch locus'') and tamely ramified over $D$.  This yields a quasi-inverse functor to the restriction functor \eqref{eq:codim2}.
\end{proof}

\begin{Remark} \label{rmk:char0}
If $k$ has characteristic $0$, then the quotient homomorphism $\pi_1 (X\setminus D) \to \pi_1^{\rm tame} (X\setminus D)$ is an isomorphism, and \cref{thm:main} applies as well. For the corresponding theorem for the topological fundamental group, thus when $k=\mathbb{C}$,  assuming $X$ smooth but not necessarily  a normal crossings compactification of $X\setminus D$, we refer to \cite[1.2, Remarks, p.~153]{GoreskyMacPherson}.  Of course, by the comparison isomorphisms, the topological theorem implies \cref{thm:main} (a). 
\end{Remark}

\section{Drinfeld's theorem} \label{sec:drinfeld}
\begin{Theorem}[{Drinfeld's theorem, \cite[Prop.~C.2]{Drinfeld}}] \label{thm:drinfeld}
Let $X$ be a geometrically irreducible projective variety over a finite field $k$, let $D\subset X$ be a divisor, and let $\Sigma\subset D$ be a closed subscheme of codimension $\ge 1$ in $D$, such that $X\setminus \Sigma$ and $D\setminus \Sigma$ are smooth. Then  any geometrically irreducible curve $Y\subset X$ which intersects $D$ in $D\setminus \Sigma\cap D$, and  is transversal to $D\setminus \Sigma$,  has the property that  the restriction to $Y\setminus D\cap Y$ of any finite \'etale connected cover of $X\setminus D$, which is tamely ramified along $D\setminus \Sigma$, is connected.
\end{Theorem}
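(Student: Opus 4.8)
The plan is to convert the connectedness assertion into a surjectivity statement for monodromy and then to locate the point where the finiteness of $k$ is indispensable. Write $U=X\setminus D$ and $Y_0=Y\setminus D\cap Y$. Because $Y$ avoids $\Sigma$ and meets $D$ transversally inside the smooth locus $D\setminus\Sigma$, the whole situation near $Y$ lives in the regular locus $X\setminus\Sigma$; thus the given cover restricts to a tame cover of $Y_0$, and purity of the branch locus (as used in \cref{lemma:codim2}) is available. A finite \'etale connected cover of $U$ is a transitive finite $\pi_1^{\tame}(U,\bar y)$-set, and its pullback to $Y_0$ is connected precisely when the image of the functoriality map $\phi\colon\pi_1^{\tame}(Y_0,\bar y)\to\pi_1^{\tame}(U,\bar y)$ acts transitively on it. Passing to the Galois closure $W\to U$ (whose connectedness over $Y_0$ forces that of the original cover, since a continuous surjective image of a connected scheme is connected), I reduce to the following clean form: for every connected tame Galois cover $W\to U$ with group $G$, the subgroup $H:=\image\big(\phi\colon\pi_1^{\tame}(Y_0)\to G\big)$ equals $G$.

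Next I would decompose $H$ through the arithmetic--geometric sequences $1\to\pi_1^{\geom}\to\pi_1^{\tame}\to\Gal(\bar k/k)\to 1$ for $U$ and for $Y_0$, which are compatible under $\phi$. Set $G^{\geom}:=\image(\pi_1^{\geom}(U)\to G)$, so that $G/G^{\geom}\cong\Gal(\F_{q^d}/\F_q)$ where $\F_{q^d}$ is the field of constants of $W$. Since $Y$ is geometrically irreducible, $\pi_1^{\tame}(Y_0)$ surjects onto $\Gal(\bar k/k)$, hence $H$ surjects onto $G/G^{\geom}$ and $H\cdot G^{\geom}=G$. Consequently $W|_Y$ is connected if and only if $H\cap G^{\geom}=G^{\geom}$, i.e.\ if and only if the geometric monodromy $H^{\geom}:=\image(\pi_1^{\geom}(Y_0)\to G)$ and a single Frobenius lift together generate $G^{\geom}$. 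It is essential to note that one may not hope for $H^{\geom}=G^{\geom}$: there is no Lefschetz theorem for the geometric $\pi_1$ of a curve, so $W|_{\overline Y}$ can genuinely be disconnected over $\bar k$. The finite field is exactly what rescues connectedness: geometric Frobenius permutes the geometric components of $W|_{\overline Y}$, and $W|_Y$ is connected over $\F_q$ as soon as this permutation is a single cycle.

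The step I expect to be the main obstacle is therefore the remaining generation statement $H=G$, equivalently that geometric Frobenius acts transitively on $\pi_0\big(W|_{\overline Y}\big)$. I would attack this over the finite field by point counting: express the number of $\F_q$-rational connected components of $W|_Y$ via the Grothendieck--Lefschetz trace formula and Lang--Weil estimates, and show it equals $1$ by comparing the zeta function of $W|_Y$ with the connectedness of $W$ over $U$. The difficulty is that the Frobenii $\Frob_y$ attached to closed points $y\in|Y_0|$ a priori record only information internal to $H$, so the equality $H=G$ cannot be read off from Chebotarev on $Y_0$ alone; one must feed in the geometry of the embedding $Y\hookrightarrow X$ (the transversality to $D\setminus\Sigma$, the tameness, and the fact that $Y$ meets $D$ only in its smooth part) to control the ramification of $W|_Y\to Y_0$ and thereby force a single Frobenius orbit on the geometric components. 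Making this argument work uniformly for every admissible curve $Y$ --- not merely for a sufficiently general one --- is the technical heart of the matter, and is the content of Drinfeld's Lemma C.3.
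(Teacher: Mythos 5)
There is a genuine gap, and in fact there is no ``paper's proof'' to compare against: \cref{thm:drinfeld} is quoted from Drinfeld (Prop.~C.2 together with Lemma~C.3) and is not proved in this note; the only argument given in \Cref{sec:drinfeld} is the spreading-out proof of \cref{prop:drinfeld}, which is a different statement. Your proposal carries out only the routine translation: connectedness of the pullback is transitivity of the monodromy image, pass to the Galois closure, and split the image along the arithmetic--geometric exact sequences. All of that is correct but contains none of the content of the theorem. The entire substance --- that $H=\image\bigl(\pi_1^{\tame}(Y_0)\to G\bigr)$ is all of $G$, equivalently that Frobenius acts transitively on $\pi_0(W|_{\overline Y})$ --- is exactly what Drinfeld's Lemma~C.3 asserts, and you explicitly defer to it rather than prove it. The proposed attack via the trace formula and Lang--Weil is not carried out, and you yourself identify why it is circular: Chebotarev on $Y_0$ only sees conjugacy classes inside $H$, so counting rational points or rational components of $W|_Y$ computes orbit numbers for $H$ and cannot by itself certify $H=G$.

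Your closing worry --- that the argument must work ``uniformly for every admissible curve $Y$, not merely for a sufficiently general one'' --- is well placed, and is where the plan breaks down rather than merely stalls. Without a positivity hypothesis on $Y$, the statement for an arbitrary geometrically irreducible transversal curve is not something one can hope to extract from point counting: take $X=E\times E'$ a product of elliptic curves, $D=\{e\}\times E'$, $\Sigma=\emptyset$, $Y=E\times\{0\}$ (transversal to $D$), and the connected \'etale cover $\mathrm{id}\times[2]\colon E\times E''\to X$ with $E'[2]\subset E'(k)$; the restriction to $Y$ is visibly disconnected. Drinfeld's curves are not arbitrary transversal curves but global complete intersections of sufficiently high degree produced by Poonen's Bertini theorem (as the paper notes immediately after the statement), and it is precisely this construction --- choosing $Y$ to hit prescribed residual conditions attached to the cover, plus the Abhyankar-type control of tame ramification along $D\setminus\Sigma$ --- that supplies the generation statement $H=G$. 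A correct write-up must therefore either import Lemma~C.3 as a black box (in which case nothing is being proved) or reproduce the Poonen--Bertini argument; your proposal does neither.
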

That such curves exist can be deduced from Poonen's Bertini theorem over finite fields. They are constructed as global complete intersections of high degree (see  \cite[C.2]{Drinfeld}).  We remark:
\begin{Proposition}\label{prop:drinfeld}
\cref{thm:drinfeld} remains true over any field $k$, and there exists $Y\subset X$ satisfying the conditions of \cref{thm:drinfeld}.
\end{Proposition}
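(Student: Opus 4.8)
The plan is to treat the two assertions separately, reserving the real work for the statement about coverings. The existence of $Y$ I would settle by a case distinction on $k$. Since $\Sigma$ has codimension $\ge 1$ in $D$, hence codimension $\ge 2$ in $X$, a general complete intersection curve of sufficiently high degree will avoid $\Sigma$ and meet the smooth divisor $D\setminus\Sigma$ transversally; when $k$ is infinite the Bertini irreducibility theorems (e.g. Jouanolou) produce such a curve which is moreover geometrically irreducible, and when $k$ is finite this is precisely the application of Poonen's Bertini theorem already invoked in \cite[C.2]{Drinfeld}. For the conclusion itself, I would first reformulate it: as $X$ and $Y$ are geometrically irreducible and we delete the proper closed subsets $D$ and $D\cap Y$, both $X\setminus D$ and $Y\setminus D\cap Y$ are geometrically connected, and ``the restriction of every connected tame cover is connected'' is equivalent to surjectivity of
\[
	\pi_1^{\tame}(Y\setminus D\cap Y,\bar y)\longrightarrow\pi_1^{\tame}(X\setminus D,\bar y).
\]

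Next I would reduce to $k$ algebraically closed. Comparing the tame homotopy exact sequences of $Y\setminus D\cap Y$ and of $X\setminus D$ over $\Gal(\overline k/k)$, a short diagram chase (using only right exactness and commutativity) shows that surjectivity of the geometric map over $\overline k$ implies surjectivity of the arithmetic map over $k$. As the geometric tame fundamental group is insensitive to extensions of algebraically closed base fields, I may assume $\overline k=\overline{K_0}$ with $K_0$ finitely generated over the prime field, and it remains to prove that for every connected tame cover $W$ of $(X\setminus D)_{\overline{K_0}}$ the restriction $W|_{Y_{\overline{K_0}}}$ is connected.

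Now I would specialize. Spread $X,D,\Sigma,Y$ and $W$ out to a family over $S=\Spec R$ with $R$ finitely generated over the prime field --- crucially over $\Z$ when $\Char k=0$, so that the closed points of $S$ have \emph{finite} residue fields in all characteristics. After a finite base change (over which $W$ becomes defined) and after shrinking $S$ to a dense open locus over which Drinfeld's hypotheses hold fibrewise and over which the normalization $\overline{\mathcal W}$ of $\mathcal X$ in $W$ has geometrically normal fibres commuting with base change, I would pick a closed point $s\in S$ with finite residue field in this locus. Applying \cref{thm:drinfeld} over \emph{all} finite extensions of $\kappa(s)$ --- and using that $\overline{\F_q}$ is their union, so that geometric connectedness over $\overline{\kappa(s)}$ is detected by connectedness at each finite level --- upgrades Drinfeld into the geometric statement over $\overline{\kappa(s)}$: the restriction to $\mathcal Y_{\bar s}$ of any connected tame cover of $\mathcal X_{\bar s}\setminus\mathcal D$ is connected. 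Finally I would transport this back to the geometric generic fibre by Stein factorization: after further shrinking, the Stein factorizations of $\overline{\mathcal W}\to S$ and of $\overline{\mathcal W}|_{\mathcal Y}\to S$ are finite étale, so the number of geometric connected components of the fibres is constant along $S$; connectedness of $W=\mathcal W_{\bar\eta}$ then forces $\mathcal W_{\bar s}$ connected, Drinfeld makes $\mathcal W_{\bar s}|_{\mathcal Y_{\bar s}}$ connected, and constancy propagates this to $W|_{Y_{\overline{K_0}}}$. Since $W$ was arbitrary, the geometric map is surjective, and we are done.

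The hard part is the spreading-out bookkeeping in the third paragraph. One must ensure that after shrinking $S$ the cover really extends to a family whose geometric fibres are again the normalizations of the fibres --- so that $\mathcal W_{\bar s}$ is a bona fide connected tame cover to which Drinfeld applies --- and that the two Stein factorizations are finite étale over the chosen locus, all while a closed point with finite residue field survives there. The delicate point is exactly the codimension $\ge 2$ locus $\Sigma$, where no tameness is imposed and along which the fibrewise normalization can misbehave; this is what must be controlled, using the generic-flatness and constructibility results of \cite[\S6,\S8,\S9,\S12]{EGA4} together with the theory of tame coverings in families from \Cref{sec:tameness-formal}. The remaining ingredients --- Bertini/Poonen, the homotopy sequence, and the ``union of finite fields'' trick --- are standard.
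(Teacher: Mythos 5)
Your overall strategy --- spread the data out over a ring $R$ of finite type over $\Z$, specialize to a closed point with finite residue field, apply \cref{thm:drinfeld} there, and transport connectedness back along the family using properness --- is the same as the paper's, and your final Stein-factorization step is a variant of the paper's cohomology-and-base-change argument for $\lambda_*\mathcal{O}_{W'_R}$. (The paper works directly with $H^0$ of the normalization $W'$ of $Y$ in $W$, which lets it avoid both your reduction to surjectivity of tame fundamental groups and your reduction to an algebraically closed base field.) The existence of $Y$ is handled differently but acceptably: the paper lifts a Poonen curve from a closed fibre over a finite residue field, while you invoke Bertini over infinite fields directly.

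The genuine gap is the step you yourself flag as ``the hard part'': you never show that tameness of $V\to X\setminus D$ along $D\setminus\Sigma$ persists in the closed fibres $V_s\to X_s\setminus D_s$ after shrinking $\Spec R$, and without this \cref{thm:drinfeld} cannot be applied at $s$. This is not a bookkeeping issue that generic flatness and constructibility from \cite{EGA4} will settle, and \Cref{sec:tameness-formal} (which concerns formal completions along $Y$, not families over $\Spec R$) is not the relevant tool. Moreover the difficulty is not located at $\Sigma$, where no condition is imposed, but at the codimension-one points of $D\setminus\Sigma$: tameness genuinely fails to specialize in general, since a Kummer covering $T^e=a$ is wildly ramified at every closed point of $\Spec R$ whose residue characteristic divides $e$. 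The mechanism that rescues the argument, and the heart of the paper's proof, is Abhyankar's Lemma \cite[Cor.~2.3.4, p.~39]{Grothendieck/Tame}: \'etale-locally near the points of $D\cap Y$ the cover $V'\to X$ is a finite disjoint union of explicit Kummer coverings $\Spec(\mathcal{O}_{U_i}[T]/(T^{e_{ij}}-a_{ij}))$; this finite amount of data spreads out over $R$, one discards the finitely many closed points whose residue characteristic divides some $e_{ij}$, and at the remaining closed points the fibres are again Kummer coverings, hence tame along $D_s\setminus\Sigma_s$ (and likewise for the restricted cover of the curve $Y_s$), so that \cref{thm:drinfeld} applies. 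You need to supply this argument, including the restriction on residue characteristics; as written, the central claim of your specialization step is asserted rather than proved.
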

\begin{proof}

The data $(X, D, \Sigma)$ are defined over a ring of finite type $R$ over $\Z$, say $(X_R, D_R, \Sigma_R)$ such that for any closed point $s\in {\rm Spec}(R)$, the restriction $(X_s, D_s, \Sigma_s)$ fulfills the assumptions of Theorem~\ref{thm:drinfeld}. Fix such an $s$, and a $Y_s$ as in the theorem.  The equations of $Y_s$ lift to an open in $ {\rm Spec}(R)$ containing $s$. Shrinking ${\rm Spec}(R)$,  the lift $Y_R$ intersects $D_R$ in $D_R\setminus \Sigma_R$ and is transversal to $D_R\setminus \Sigma_R$, thus $Y:=Y_R\otimes_R k$ intersects $D$ in $D\setminus \Sigma$ and is transversal to $D\setminus \Sigma$, and for all closed points $t\in {\rm Spec}(R)$, $Y_t$ intersects $D_t$ in $D_t\setminus \Sigma_t$ and is transversal to $D_t\setminus \Sigma_t$.

Now let $h: V\to X\setminus D$ be a connected  finite \'etale cover, tamely ramified  along  $D\setminus \Sigma$. Writing $W:=V\times_{X\setminus D} (Y\setminus D)$, our goal is to prove that $W$ is connected.  Let $h':V'\rightarrow X$ be the normalization of $X $ in $V$, and let $g':W'\rightarrow Y$ be the normalization of $Y$ in $W$. By assumption $D\cap Y$ is finite \'etale over $k$, so we can write $D \cap Y=\coprod_{i=1}^n \Spec(k(x_i))$ with $k\subset k(x_i)$ finite separable. As $h'$ is tamely ramified with respect to $D\setminus \Sigma$, according to Abhyankar's Lemma (\cite[Cor.~2.3.4, p.~39]{Grothendieck/Tame})  there are affine \'etale neighborhoods $\eta_i:U_i\rightarrow X\setminus \Sigma$ of $x_i, i=1,\ldots, n$, such that for every $i$,  $\eta_i\times h':U_i\times_{X\setminus \Sigma} V'\rightarrow U_i$ is isomorphic to a disjoint union of Kummer coverings; we have a diagram 
\begin{equation}
\label{eq:kummer-isos}
	\begin{tikzcd}
		U_i\times_{X\setminus\Sigma} V'\ar[swap]{dr}{\eta_i\times h'}\ar{rr}{\cong}&&\coprod_{j=1}\Spec(\mathcal{O}_{U_i}[T]/(T^{e_{ij}}-a_{ij}))\ar{dl}{\text{Kummer}}\\
		&U_i
	\end{tikzcd}
\end{equation}
where the  $e_{ij}$ are prime to $\Char(k)$, the $a_{ij}\in H^0(U_i,\mathcal{O}_{U_i})$ are regular and units outside of $(D\setminus \Sigma)\times_{(X\setminus \Sigma)} U_i$.

Shrinking ${\rm Spec}(R)$, the data $(X,\Sigma, Y, D,h,h',g', \eta_i, a_{ij})$ and the isomorphisms from \eqref{eq:kummer-isos} are defined over  
$R$; denote by $(X_R, \Sigma_R, \ldots)$ the corresponding models over $R$.  Shrinking $\Spec R$ again, we may assume that $h'_R: V'_R\rightarrow X_R\setminus \Sigma_R$ is \'etale over $X_R\setminus D_R$, that $g'_R:W'_R\rightarrow Y_R$ is \'etale over $Y_R\setminus D_R$ and that $Y_s$ is  geometrically irreducible for all closed points $s\in {\rm Spec}(R)$. 

Moreover, as $D\setminus \Sigma$ is smooth and as $Y$ intersects $D$ transversally and in  $D\setminus \Sigma$, we may assume that $D_R\cap Y_R$ is finite \'etale over $\Spec R$, and that

\[
	\coprod_i \eta_{i,R}|_{Y_R\cap D_R}:\coprod_iU_{i,R}\times_{X_R} (Y_R\cap D_R) \rightarrow (Y_R\cap D_R)	
\] 
is surjective.  

For $s\in \Spec R$ a closed point of residue characteristic prime to the exponents $e_{ij}$ from \eqref{eq:kummer-isos}, the morphisms $\eta_{i,s}|_{Y_s}:U_{i,s}\times_{X_s} Y_s\rightarrow Y_s\setminus \Sigma_s$ are \'etale neighborhoods of the  points of $Y_s$ lying on $D_s\setminus \Sigma_s$, and each $g'_s\times \eta_{i,s}$ is isomorphic to a disjoint union of Kummer coverings. Thus, again by Abhyankar's Lemma (\cite[Cor.~2.3.4, p.~39]{Grothendieck/Tame}), $g'_s:W'_s\rightarrow Y_s\setminus \Sigma_s$ is tamely ramified along $(Y_s\cap D_s)\setminus \Sigma_s$.

The  morphism $\lambda: W_R'\to {\rm Spec}(R)$ is projective, thus shrinking ${\rm Spec}(R)$ again, one has base change for $\lambda_*\mathcal{O}_{W'_R}$. By \cref{thm:drinfeld},  $H^0(W'_s, \mathcal{O}_{W'_s})=k(s)$. Thus $\lambda_*\mathcal{O}_{W'_R}$ is a  $R$-projective module of rank $1$, thus  by base change again, $H^0(W', \mathcal{O}_{W'})=k$, thus $W$ is connected.  This finishes the proof.
\end{proof}

\begin{Remark} \label{rmk:kerz}
Recall that in \cite{Kerz/Tameness}, tame coverings of $X\setminus D$ in Theorem~\ref{thm:drinfeld} are defined, and more generally, tame coverings of regular schemes of finite type over an excellent, integral, pure-dimensional scheme.   They build a Galois category, with Galois group $\pi_1^{\rm tame}(X\setminus D, \bar y)$, which is a full subcategory of the Galois category of the covers considered by Drinfeld in \cref{thm:drinfeld}, where he considered the tameness condition only along $D\setminus \Sigma$. Thus Proposition~\ref{prop:drinfeld} implies that the functoriality homomorphism 
$\pi_1^{\rm tame}(Y \setminus D\cap Y, \bar y)\to \pi_1^{\rm tame}(X\setminus D, \bar y) $ is surjective. 
As in \cref{rmk:char0}, we observe that this latter formulation in characteristic $0$ follows from 
\cite[1.2, Remarks, p.~153]{GoreskyMacPherson}.
\end{Remark}

\section{Comments}\label{sec:generalization}
Drinfeld's theorem, unlike \cref{thm:main},  does not request $X$ to be a good compactification of $X\setminus D$. In view of the lack of resolution of singularities, this is the best possible formulation.  It is not clear how to formulate a version of \cref{thm:main}, \ref{main:LEFF}  without having a good compactification. 

\medskip

%
Let $X$ be a smooth projective, connected $k$-scheme, and let $D$  be a strict normal crossings divisor.  If $k$ is perfect, in \cite{Kerz/Lefschetz}, a quotient $\pi_1^{\ab}(X,D)$ of $\pi_1^{\et,\ab}(X \setminus D)$ is defined. There are canonical quotient homomorphisms
\begin{equation*}
\begin{tikzcd}
	\pi_1^{\et,\ab}(X\setminus D)\ar[two heads]{r}\ar[two heads]{d}&\pi_1^{\et,\ab}(X,D)\ar[two heads]{d}\\
	\pi_1^{\tame,\ab}(X \setminus D)\rar[-, double equal sign distance]&\pi_1^{\et,\ab}(X,D_{\red}),
\end{tikzcd}
\end{equation*}
where the groups in the left column are the abelianizations of the \'etale and tame fundamental group.  Let $\ell$ a prime number different from $\Char(k)$. The $\overline{{\Q}}_{\ell}$-lisse sheaves of rank $1$, which have ramification bounded by $D$ in the sense of  \cite[Def.~3.6]{Esnault/Kerz}, are precisely the irreducible $\ell$-adic representations of $\pi_1^{\et,\ab}(\bar{X},D)$.  The main result of \cite{Kerz/Lefschetz} is a Lefschetz theorem in the form of \cref{thm:main} for $\pi_1^{\et,\ab}(\bar{X},D)$.	
		
\medskip

One would wish to have a general notion of fundamental group $\pi_1^{\et}(X,D)$ encoding finite  \'etale covers with ramification bounded by $D$, and to show a Lefschetz theorem of the kind \cref{thm:drinfeld} for them. This would shed a new light  on Deligne's finiteness theorem \cite{Esnault/Kerz} over a finite field. 

\bibliographystyle{amsalphacustomlabels}
\bibliography{alggeo}

\providecommand{\bysame}{\leavevmode\hbox to3em{\hrulefill}\thinspace}
\providecommand{\MR}{\relax\ifhmode\unskip\space\fi MR }
\providecommand{\MRhref}[2]{%
  \href{http://www.ams.org/mathscinet-getitem?mr=#1}{#2}
}
\providecommand{\href}[2]{#2}
\begin{thebibliography}{Tem08}

\bibitem[Dri12]{Drinfeld}
V.~Drinfeld, \emph{On a conjecture of {D}eligne}, Mosc. Math. J. \textbf{12}
  (2012), no.~3, 515--542, 668. \MR{3024821}

\bibitem[EGA1]{EGA1}
A.~Grothendieck, \emph{{\'El\'ements de g\'eom\'etrie alg\'ebrique (r\'edig\`es
  avec la collaboration de Jean Dieudonn\'e): I. Le langage des sch\'emas}},
  Inst. Hautes \'Etudes Sci. Publ. Math. (1960), no.~4, 228. \MR{0217083 (36
  \#177a)}

\bibitem[EGA4]{EGA4}
\bysame, \emph{{\'El\'ements de g\'eom\'etrie alg\'ebrique. IV: \'Etude locale
  des sch\'emas et des morphismes de sch\'emas.}}, Publ. Math. IHES \textbf{32}
  (1967).

\bibitem[EK12]{Esnault/Kerz}
H.~Esnault and M.~Kerz, \emph{A finiteness theorem for {G}alois representations
  of function fields over finite fields (after {D}eligne)}, Acta Math. Vietnam.
  \textbf{37} (2012), no.~4, 531--562. \MR{3058662}

\bibitem[Gie75]{Gieseker/FlatBundles}
D.~Gieseker, \emph{Flat vector bundles and the fundamental group in non-zero
  characteristics}, Ann. Scuola Norm. Sup. Pisa Cl. Sci. (4) \textbf{2} (1975),
  no.~1, 1--31. \MR{MR0382271 (52 \#3156)}

\bibitem[GM71]{Grothendieck/Tame}
A.~Grothendieck and J.~P. Murre, \emph{The tame fundamental group of a formal
  neighbourhood of a divisor with normal crossings on a scheme}, Lecture Notes
  in Mathematics, Vol. 208, Springer-Verlag, Berlin, 1971.

\bibitem[GM88]{GoreskyMacPherson}
M.~Goresky and R.~MacPherson, \emph{Stratified {M}orse theory}, Ergebnisse der
  Mathematik und ihrer Grenzgebiete (3) [Results in Mathematics and Related
  Areas (3)], vol.~14, Springer-Verlag, Berlin, 1988. \MR{932724 (90d:57039)}

\bibitem[Ked11]{Kedlaya/Excellent}
K.~S. Kedlaya, \emph{Good formal structures for flat meromorphic connections,
  {II}: excellent schemes}, J. Amer. Math. Soc. \textbf{24} (2011), no.~1,
  183--229. \MR{2726603 (2011i:14042)}

\bibitem[Kin15]{Kindler/FiniteBundles}
L.~Kindler, \emph{Regular singular stratified bundles and tame ramification},
  Trans. Amer. Math. Soc. \textbf{367} (2015), no.~9, 6461--6485. \MR{3356944}

\bibitem[KS10]{Kerz/Tameness}
M.~Kerz and A.~Schmidt, \emph{On different notions of tameness in arithmetic
  geometry}, Math. Ann. \textbf{346} (2010), no.~3, 641--668. \MR{2578565
  (2011a:14052)}

\bibitem[KS14]{Kerz/Lefschetz}
M.~Kerz and S.~Saito, \emph{Lefschetz theorem for abelian fundamental group
  with modulus}, Algebra Number Theory \textbf{8} (2014), no.~3, 689--701.
  \MR{3218806}

\bibitem[Mat80]{Matsumura/CommutativeAlgebra}
H.~Matsumura, \emph{Commutative algebra}, second ed., Mathematics Lecture Note
  Series, vol.~56, Benjamin/Cummings Publishing Co., Inc., Reading, Mass.,
  1980. \MR{575344 (82i:13003)}

\bibitem[SGA1]{SGA1}
A.~Grothendieck and M.~Raynaud, \emph{{Rev{\^e}tements {\'E}tales et Groupe
  Fondamental: S\'eminaire de G\'eom\'etrie Alg\'ebrique de Bois-Marie
  1960/61.}}, Lecture Notes in Mathematics, vol. 224, Springer-Verlag, 1971.

\bibitem[SGA2]{SGA2}
A.~Grothendieck, \emph{Cohomologie locale des faisceaux coh\'erents et
  th\'eor\`emes de {L}efschetz locaux et globaux ({SGA} 2)}, Documents
  Math\'ematiques (Paris) [Mathematical Documents (Paris)], 4, Soci\'et\'e
  Math\'ematique de France, Paris, 2005, S{\'e}minaire de G{\'e}om{\'e}trie
  Alg{\'e}brique du Bois Marie, 1962, Augment{\'e} d'un expos{\'e} de
  Mich{\`e}le Raynaud. [With an expos{\'e} by Mich{\`e}le Raynaud], With a
  preface and edited by Yves Laszlo, Revised reprint of the 1968 French
  original. \MR{2171939 (2006f:14004)}

\bibitem[Tem08]{Temkin/Desingularization}
M.~Temkin, \emph{Desingularization of quasi-excellent schemes in characteristic
  zero}, Adv. Math. \textbf{219} (2008), no.~2, 488--522. \MR{2435647
  (2009h:14027)}

\bibitem[Val75]{Valabrega}
P.~Valabrega, \emph{On the excellent property for power series rings over
  polynomial rings}, J. Math. Kyoto Univ. \textbf{15} (1975), no.~2, 387--395.
  \MR{0376677 (51 \#12852)}

\end{thebibliography}

\end{document}